\newcommand{\R}{\mathbb{R}} 
\newcommand{\N}{\mathbb{N}} 
\newcommand{\C}{\mathbb{C}}
\newcommand{\scap}{\operatorname{cap}}
 \newcommand{\Halb}{\mathbb{H}} 
\newtheorem{theorem}{Theorem}[section]
\newtheorem{lemma}[theorem]{Lemma}
\theoremstyle{definition}
\newtheorem{definition}[theorem]{Definition}
\newtheorem{remark}[theorem]{Remark}
\numberwithin{equation}{section}
\title{Improved long time existence for the Willmore flow of surfaces of revolution with Dirichlet data.}
\author{Sascha Eichmann\thanks{The author thanks Prof. Reiner Sch\"atzle for fruitful comments and Manuel Schlierf for discussing the equivalence of \cite[Thm. 1.3]{SchlierfWillmoreFlowImprov24} and Thm. \ref{1_1}.},\\
Mathematisch-Naturwissenschaftliche Fakultät,\\
Eberhard Karls Universität Tübingen,\\
Auf der Morgenstelle 10,\\
D-72076 Tübingen, Germany\\
E-mail: \href{mailto:sascha.eichmann@math.uni-tuebingen.de}{sascha.eichmann@math.uni-tuebingen.de}}
\begin{document}
 \maketitle

\begin{abstract}
To avoid possible singularities in the Willmore flow, one usually works under an energy threshold provided by the Li-Yau inequality. 
Here we improve this threshold by also considering parts outside of a possible singularity together with Dirichlet boundary data. We work in the class of surfaces of revolution.
\end{abstract}
\textbf{Keywords.} Willmore flow, surfaces of revolution, Dirichlet boundary conditions\\
\textbf{MSC.} 35K35, 49Q10, 34B60, 58E15

\section{Introduction}
\label{sec:1}
For a smooth two dimensional oriented immersion $f:\Sigma\rightarrow\R^3$  with pull back measure $\mu_f$ and mean curvature $H$ (i.e. the sum of the principal curvatures) the Willmore energy is defined by
\begin{equation}
\label{eq:1_1}
 W_e(f) = \frac{1}{4}\int_\Sigma H^2\, d\mu_f.
\end{equation}
Thomsen gave a first analysis in \cite{Thomsen} of this energy. Further mathematical insight was not possible, because the necessary tools were not developed yet. 
Willmore later revived the discussion in \cite{Willmore} and formulated his famous conjecture for the optimal torus.
Work on this conjecture essentially began in \cite{Simon} and the result was finally proven in \cite{NevesMarques}.
This mainly concernes closed Willmore surfaces, i.e. critical points of $W_e$. 
Here we are examining the corresponding $L^2$-flow, which is given by
\begin{equation}
 \label{eq:1_2}
 \partial_t f = -(\Delta_{g_f} H + 2H(H^2-K)) N.
\end{equation}
$N$ is the unit normal, such that the vector valued mean curvature $\vec{H}$ satisfies $H=\vec{H}\cdot N$. 
Furthermore $K$ is the Gauss curvature and $\Delta_{g_f}$ is the Laplace-Beltrami operator of $f$.
In the series of papers \cite{KuwertSchaetzleWillmoreFlow02}\cite{KuwertSchatzle1}\cite{KuwertSchaetzle01}\cite{KuwertSchatzle2} Kuwert\&Schätzle worked out a lot of the analytical properties of this flow for closed immersions, which are also summarized in \cite{KuwertSchaetzleSurvey}.
Furthermore properties of the limit of the flow for approaching the maximal time of existence were found in \cite{ChillFasSchaetz09} for general closed immersions.
Contrasting this development we are interested in the $L^2$-flow with Dirichlet boundary data here (results for other  boundary data can be found in e.g. \cite{MenzelPHD2020} for the Navier case). 
To be precise let $f_0:\Sigma\rightarrow\R^3$ be given. 
Let $\nu_{f_0}:\partial\Sigma\rightarrow\R^3$ be the inner conormal of $f_0$. 
Then we examine the following initial/boundary value problem for a family of immersions $f:[0,T)\times \Sigma$ with $T\in[0,\infty]$.
\begin{equation}
\label{eq:1_3}
  \left\{\begin{array}{ll}\partial_t f = -(\Delta_{g_f} H + 2H(H^2-K)) N& \mbox{ in }[0,\infty)\times\Sigma\\
  f(0,p) = f_{0}(p) &\mbox{ for all }p\in\Sigma\\
  f(t,p) = f_{0}(p)& \mbox{ for }t\geq 0\mbox{ and }p\in\partial\Sigma\\
  \nu_{f(t,p)} = \nu_{f_{0}(p)} &\mbox{ for }t\geq 0\mbox{ and }p\in\partial\Sigma.
\end{array}\right.
 \end{equation}
 To extract more analytical properties, 
we additionally assume, that the initial data is given by a surface of revolution. 
They are defined by a smooth curve (also called profile curve)
$\gamma:[0,1]\rightarrow \R\times(0,\infty)=:\Halb$, which induces such a surface of revolution $S(\gamma)$ by the parametrisation $f_\gamma$:
\begin{equation}
 \label{eq:1_4}
[0,1]\times\mathbb{S}^1\ni(s,\varphi)\mapsto f_\gamma(s, \varphi) = (\gamma^1(s), \gamma^2(s)\cos\varphi, \gamma^2(s)\sin\varphi).
\end{equation}
Existence for minimisers under Dirichlet boundary data without rotational symmetry was discussed in \cite{Schaetzle}.
To obtain more analytic properties and finer results one can additionally impose the aforementioned rotational symmetry.
This was done in e.g.
\cite{DallDeckGru}, \cite{DallFroehGruSchie}, \cite{EichmannGrunau} and \cite{EichmannSchaetzleWillmoreExist23} (see also the references therein).
For the flow
of rotational symmetric tori long time existence results have been achieved in e.g. \cite{DallAcquaMuellerSchaetzleSpener}.
In this the central observation is, that if the hyperbolic length of the profil curve (see \eqref{eq:2_8}) stays bounded during the evolution, the flow exists for all times and converges to a Willmore surface.
This is closely related to the flow of elastic curves (cf. \eqref{eq:2_7}). We mention here some results in hyperbolic space
\cite{DallAcquaSpener2017},\cite{DallAcquaSpener2018}, \cite{MuellerSpenerElasticFlow}, \cite{SchlierfHypFlowBlowUp23}
and in euclidean space \cite{RuppSpener2020}, \cite{DallAcquaLinPozzi2017} (please see also the references therein).

Schlierf showed in \cite{SchlierfWillmoreFlow} that this idea of bounded hyperbolic length transfers to Dirichlet boundary data as well.
He also gave a sufficient condition for this
hyperbolic length to stay bounded. 
To be precise this is an energy bound for the initial datum on the Willmore energy. 
For the readers convenience we cite the precise result in Theorem \ref{A_2}.
Let us make the notion of boundary values for rotational surfaces more precise, i.e. let
 $x_\pm\in\R$, $\alpha_\pm>0$ and $\beta_\pm\in\R$. 
 Now we fixate the Dirichlet boundary data for a profile curve $\gamma:[0,1]\rightarrow\mathbb{H}$ and set (cf. \cite[Eq. (1.6)]{EichmannSchaetzleWillmoreExist23})
 \begin{equation}
 \label{eq:1_5}
 \begin{array}{cc}
  (x_-,\alpha_-):=\gamma(0),&\ (x_+,\alpha_+)=\gamma(1),\\
  (\cos\beta_-,\sin\beta_-)=\frac{d_s\gamma(0)}{|d_s\gamma(0)|},&\ -(\cos\beta_+,\sin\beta_+)=\frac{d_s\gamma(1)}{|d_s\gamma(1)|}
  \end{array}
 \end{equation}
 The energy bound by Schlierf in \cite{SchlierfWillmoreFlow} can also be interpreted as follows:\\
 There are uniquely defined circles or vertical lines $\gamma_{circ}(x_\pm,\alpha_\pm,\beta_\pm)$ (see \eqref{eq:2_9} and \eqref{eq:2_10}), such that the surface of revolution w.r.t. to the profile curve
 \begin{equation}
 \label{eq:1_7}
   \gamma_{closed} = \gamma_{circ}(x_-,\alpha_-,\beta_-)\oplus \gamma\oplus\gamma_{circ}(x_+,\alpha_+,\beta_+),
 \end{equation}
 i.e. $S(\gamma_{closed})$, is a close surface, see Figure \ref{fig:1_1}.
 \begin{figure}[h]
 \centering 
\includegraphics{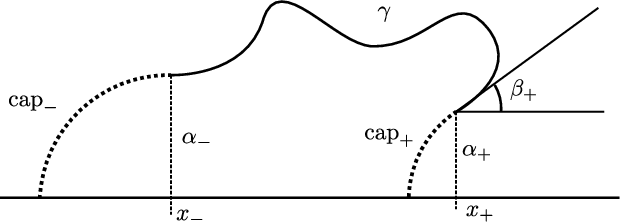}  
\caption{Boundary data and added sphere caps.}
\label{fig:1_1}
\end{figure}
 Furthermore we call the resulting sphere caps
 $$\scap_\pm := S(\gamma_{circ}(x_\pm,\alpha_\pm,\beta_\pm)).$$
Now we define the closed Willmore energy (cf. \cite[Eq. (2.12)]{EichmannSchaetzleWillmoreExist23})
\begin{align}
 \label{eq:1_8}
 \begin{split}
 W_{closed}^e(S(\gamma)) :=& W_e(S(\gamma)) + W_e(\scap_-) + W_e(\scap_+) + 4\pi\theta^2(\mathcal{H}^2\lfloor S(\gamma),\infty)\\
 & + 4\pi\theta^2(\mathcal{H}^2\lfloor \scap_-,\infty) + 4\pi\theta^2(\mathcal{H}^2\lfloor \scap_+,\infty).
 \end{split}
\end{align}
Here $\theta^2(\mathcal{H}^2\lfloor \scap_\pm,\infty)$ is the $2$-density of $\scap_\pm$ at infinity. The densities at $\infty$ are added, so that this energy becomes invariant for all inversions at  points in $\R\cup\{\infty\}$, see the argument in \cite[p. 6]{EichmannSchaetzleWillmoreExist23}, i.e. \cite{Chen74}. 
Now Schlierfs energy bound (see Theorem \ref{A_2}) is equivalent to the condition
\begin{equation}
 \label{eq:1_8_1}
 W^e_{closed}(S(\gamma_0)) \leq 8\pi
\end{equation}
for the initial curve $\gamma_0$ (combine \eqref{eq:2_7} with Lemma \ref{4_3}). This is the same bound given by the Li-Yau inequality (see \cite{LiYau}) for having selfintersections.
In other words Schlierfs bound mitigates a self intersection but does not incorporate the parts outside of this singularity.
Here we are going to exploit exactly these parts and improve the energy condition by Schlierf. 
These conditions cannot be improved arbitrarily. Counterexamples were provided by Blatt in \cite{Blatt2009} for closed surfaces and Schlierf in \cite{SchlierfWillmoreFlow} under Dirichlet boundary data. \\
Let us formulate our new energy condition. For this let $x\in\R\cup\{\infty\}$ be arbitrary but fixated. Let $c_\pm^x:[0,1]\rightarrow\overline{\Halb}$ be smooth and satisfy the boundary conditions at their respective starting points, i.e.
\begin{equation}
 \label{eq:1_9}
 \begin{array}{cc}
  (x_-,\alpha_-):=c_-^x(0),&\ (x_+,\alpha_+)=c_+^x(0),\\
  (\cos\beta_-,\sin\beta_-)=\frac{{d_sc_{-}^{x}}(0)}{|{d_sc_-^x}(0)|},&\ (\cos\beta_+,\sin\beta_+)=\frac{{d_sc_+^x}(0)}{|{d_sc_+^x}(0)|}
  \end{array}
 \end{equation}
such that 
\begin{equation}
\label{eq:1_10}
 c_\pm(1)=(x,0).
\end{equation}
Furthermore let $c_\pm^x$ be part of Moebius transformed catenoids or half circles with centre on the $x$-axis.
This is well defined for all $x\in\R\cup\{\infty\}$ by the Lemmata \ref{3_1} and \ref{3_2}.
Then we define the union $c^x:[0,2]\rightarrow\overline{\Halb}$ of these two curves as (see Figure \ref{fig:1_1_1})
\begin{figure}[h]
 \centering 
\includegraphics{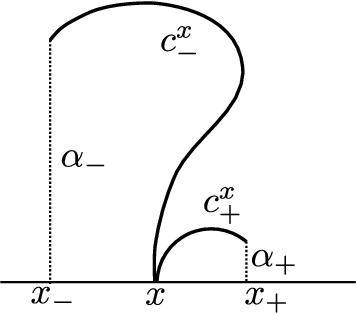}  
\caption{Sketch of $c^x$ consisting of one Moebius transformed catenoid and one sphere.}
\label{fig:1_1_1}
\end{figure}
\begin{equation}
 \label{eq:1_11}
 c^x(s):=\left\{\begin{array}{cc} c_-^x(s),& \mbox{ for }s\in[0,1]\\ c_+^x(2-s),& \mbox{ for }s\in[1,2]\end{array}\right.
\end{equation}
Similar pictures to $c^x$ also appear when the flow for the elastic energy (see \eqref{eq:2_6}) blows up, see \cite{SchlierfHypFlowBlowUp23}.\\
The main result of this paper is now as follows:
\begin{theorem}
 \label{1_1}
 Let $\gamma_0:[0,1]\rightarrow\Halb$ be a regular smooth curve satisfying the boundary data \eqref{eq:1_5}, such that
 $$W^e_{closed}(S({\gamma_0})) \leq \inf\{W^e_{closed}(S({c^x}))|\ x\in\R\cup\{\infty\}\}.$$
 Then there is a global, rotational symmetric solution $f:[0,\infty)\times\Sigma\rightarrow \R^3$ to \eqref{eq:1_4}, with initial data $\gamma_0$ and satisfying the boundary data \eqref{eq:1_5}.
 Moreover $f(t,\cdot)$ converges up to reparametrization smoothly to a Willmore immersion $f_\infty$ for $t\rightarrow\infty$. 
\end{theorem}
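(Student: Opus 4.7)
The plan is to argue by contradiction, combining Schlierf's sufficient condition (Theorem \ref{A_2}) with a blow-up analysis at a hypothetical singular time, whose limit should be one of the comparison curves $c^x$. Suppose the flow does not exist globally or fails to converge. By Schlierf's result the obstruction is that the hyperbolic length of the evolving profile curve $\gamma(t,\cdot)$ becomes unbounded as $t$ approaches the maximal time of existence $T\leq\infty$. Along a sequence $t_k\to T$, I would extract a blow-up limit after suitable Möbius rescaling, using the uniform bound on the Willmore energy along the flow (which is non-increasing) and standard compactness for immersions with bounded second fundamental form in $L^2$.

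The first main step is to identify the possible blow-up profiles. Since the hyperbolic length diverges, the rescaled profile curves must concentrate at a point on the $x$-axis (the rotation axis), i.e.\ at some $x\in\R\cup\{\infty\}$. The limit curve, after Möbius rescaling, should split into two pieces: one piece attached to each of the fixed boundary data $(x_\pm,\alpha_\pm,\beta_\pm)$, connecting out to the singularity point $(x,0)$. Because the flow is the Willmore $L^2$-flow and the limit configuration is stationary (being attained in the limit of a gradient flow with controlled energy), the two pieces must be Willmore surfaces of revolution with one free end on the axis; by the classification used in Lemmata \ref{3_1} and \ref{3_2}, these are exactly the Möbius-transformed catenoids and the half spheres centred on the axis. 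Hence the limit profile is precisely the curve $c^x$ from \eqref{eq:1_11} for some $x\in\R\cup\{\infty\}$.

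The second main step is to transfer the energy inequality to the limit. By conformal invariance of the closed Willmore energy $W^e_{closed}$ under inversions (the reason the densities at infinity were added in \eqref{eq:1_8}), together with the monotonicity of $W_e$ along the flow and lower semicontinuity of $W^e_{closed}$ under the type of varifold convergence produced by the blow-up, I obtain
\[
 W^e_{closed}(S(c^x)) \leq \liminf_{k\to\infty} W^e_{closed}(S(\gamma(t_k,\cdot))) \leq W^e_{closed}(S(\gamma_0)).
\]
Combined with the hypothesis $W^e_{closed}(S(\gamma_0))\leq\inf_x W^e_{closed}(S(c^x))$, this forces equality and in particular rigidity of the limit, from which a contradiction to the blow-up (loss of hyperbolic length) must be extracted, most likely by showing that equality forces $\gamma_0$ itself to already be a $c^x$, which violates $\gamma_0:[0,1]\to\Halb$ (the $c^x$ touch the axis). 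Given no singularity can form, Schlierf's theorem \ref{A_2} then directly yields global existence and smooth subconvergence up to reparametrisation to a Willmore immersion $f_\infty$.

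The hard part will be the blow-up classification: justifying that the possibly highly non-trivial concentration behaviour of the profile curves at the axis organises itself, after an appropriate Möbius normalisation, into exactly the two-piece configuration $c^x$, with each piece a Möbius image of a catenoid or a round sphere cap, and no additional bubbles escaping. This requires careful accounting of the hyperbolic length defect via the hyperbolic analogue of \eqref{eq:2_7}, control on the conormal at the boundary (which is preserved by \eqref{eq:1_3}), and a removability-of-singularities argument on the axis to promote the weak limit to a smooth Willmore surface covered by the classification Lemmata \ref{3_1} and \ref{3_2}.
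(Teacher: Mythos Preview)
Your overall contradiction strategy matches the paper's, but the blow-up classification step has a genuine gap. You assert that the limit configuration must be stationary ``being attained in the limit of a gradient flow with controlled energy'' and hence, via the classification, must coincide with some $c^x$. This inference is not valid in general: limits of a gradient flow along a sequence $t_k\to T$ at a singular time need not be critical points of the energy. The paper does \emph{not} claim that the limit profile $\gamma_{sing}$ is Willmore. Instead it applies the Choksi--Veroni compactness (Theorem~\ref{4_6}) to obtain a generalized generator $\gamma_{sing}$, shows via the a priori bound $W^e_{closed}(S(\gamma_0))<12\pi$ together with Lemma~\ref{A_1} that $\gamma_{sing}$ touches the axis at exactly one interior point $(x,0)$, and then invokes Theorem~\ref{4_7}: the two halves of $\gamma_{sing}$ are merely \emph{competitors} for the minimisation problem in $M_{p,\beta,x}$, whose minimiser is $c^x$, so that $W^e_{closed}(S(\gamma_{sing}))\geq W^e_{closed}(S(c^x))$. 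No stationarity of the limit is required.

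Your closing step is also problematic. From your chain of inequalities you only deduce equality $W^e_{closed}(S(\gamma_0))=\inf_x W^e_{closed}(S(c^x))$, and the asserted rigidity ``$\gamma_0$ itself must already be a $c^x$'' does not follow from this. The paper instead uses strict monotonicity of the energy along the flow away from critical points to obtain
\[
\liminf_{\ell\to\infty} W^e_{closed}(S(\gamma(t_\ell,\cdot)))< W^e_{closed}(S(\gamma_0)),
\]
which contradicts the comparison inequality directly; if $\gamma_0$ is already Willmore the flow is constant and the hyperbolic length cannot diverge. Finally, once boundedness of the hyperbolic length is established, the relevant black box is Theorem~\ref{A_3}, not Theorem~\ref{A_2} (the latter requires the stronger threshold~\eqref{eq:1_8_1}, which is precisely what you are trying to relax).
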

Independent and at the same time as our result Schlierf also published an improvement of the energy condition in \cite{SchlierfWillmoreFlowImprov24}. He used a Li-Yau inequality tailored to boundary data. 
Therefore this author conjectures that Schlierfs method can be used in a more general context, e.g. without rotational symmetry. 
Surprisingly the result in \cite[Thm 1.3]{SchlierfWillmoreFlowImprov24} and ours are exactly the same. 
Schlierf discusses this in detail in \cite[§ D]{SchlierfWillmoreFlowImprov24}.

The idea of the proof of Thm. \ref{1_1} is as follows: We will apply Schlierfs results from \cite{SchlierfWillmoreFlow}, which are summarized in Theorem \ref{A_3}. 
We proceed by contradiction and assume the hyperbolic length \eqref{eq:2_8} of the profile curves diverges to $\infty$.
Then by some geometric measure theory developed in \cite{ChoksiVeroni} we obtain a limit with a singularity on the $x$-axis and still obeying the boundary data.
Theorem \ref{4_7} essentially yields, that energy minimizers with boundary data and one singularity have to be Moebius transformed catenoids or circles. This is a contradiction to our energy assumption. 
The proof is given in detail in section \ref{sec:5}.
 
 Let us compare the energy condition by Schlierf \eqref{eq:1_8_1} to our condition given in Theorem \ref{1_1}. 
 Since any curve $c^x$ has exactly one singularity in the inner part, Lemma \ref{A_1} yields
 $$W^e_{closed}(S({c^x}))\geq 8\pi.$$
 Therefore our energy condition is always an improvement over \cite[Theorem 1.1]{SchlierfWillmoreFlow}, i.e. Theorem \ref{A_2}, if $c^x$ does not consists of two Moebius transformed circles. 
 In the circles case, the conditions are the same. 
 Furthermore the arguments in the proof of \cite[Prop. 3.9]{EichmannSchaetzleWillmoreExist23} together with our result guarantee the existence of admissible initial curves for almost all boundary data. 
 For the remaining cases \cite[Prop. 4.1]{EichmannSchaetzleWillmoreExist23} shows non-existence of energy minimising solutions, hence finding admissible initial curves is harder there (see also the discussion in \cite[§ 5]{SchlierfWillmoreFlow}).\\
 On the other hand, if the added sphere caps $\scap_\pm$ have a nontrivial intersection, the Li-Yau inequality (see \cite{LiYau}) yields
 $$W^e_{closed}(S({\gamma_0}))\geq 8\pi,$$
 which greatly impedes the applicability of \cite[Theorem 1.1]{SchlierfWillmoreFlow}.\\
 In the case of horizontal clamping, i.e. $\beta_+-\pi=\beta_-=0$, we calculate $W^e_{closed}(S({c^x}))$ in section \ref{sec:3}. 
 The following formula follows from combining \eqref{eq:3_22} with \eqref{eq:2_7}. 
 \begin{theorem}
  \label{1_2}Let $\alpha_+,\alpha_->0$ be given. If $\beta_-=0$, $\beta_+=\pi$ and $x_\pm=\pm1$, we have
  $$W^e_{closed}(S({c^x})) = 12\pi+ 4\pi\left(\frac{\alpha_+(x-1)}{\alpha_+^2 + (x-1)^2} - \frac{\alpha_-(x+1)}{\alpha_-^2 + (x+1)^2}\right).$$
  Furthermore we have the following asymptotic for fixated $\alpha_->0$
  $$\liminf_{\alpha_+\rightarrow\infty} \inf\{W^e_{closed}(S({c^x}))|\ x\in\R\cup\{\infty\}\}\geq 10\pi.$$
 \end{theorem}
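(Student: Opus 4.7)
The plan is to treat the explicit formula and the asymptotic bound separately. For the first, I would evaluate each summand of the closed Willmore energy \eqref{eq:1_8} under the prescribed horizontal clamping. With $\beta_-=0$, $\beta_+=\pi$ and $x_\pm=\pm 1$, the unique circles with centre on the $x$-axis meeting the boundary data have centre $(\pm 1,0)$ and radius $\alpha_\pm$, so the added caps $\scap_\pm$ are hemispheres of those radii, each contributing Willmore energy $2\pi$ with vanishing density at infinity (hence $4\pi$ in total). The two pieces $c_\pm^x$ are determined by Lemmas \ref{3_1} and \ref{3_2} as arcs of Moebius-transformed catenoids, degenerating in some cases to half-circles with centre on the $x$-axis. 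Parametrising these arcs explicitly and applying \eqref{eq:2_7} reduces $W_e(S(c_\pm^x))$ together with the density at infinity of $S(c^x)$ to a hyperbolic bending integral; this is precisely the computation recorded as \eqref{eq:3_22} in Section \ref{sec:3}, whose combination with \eqref{eq:2_7} yields the claimed formula. The constant $12\pi$ then decomposes as $4\pi$ from the hemispheres plus $8\pi$ contributed by the double point of $c^x$ at $(x,0)$ on the axis of rotation.

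For the asymptotic bound, I would set
$$F_\pm(x):=\frac{\alpha_\pm(x\mp 1)}{\alpha_\pm^2+(x\mp 1)^2}\in\left[-\tfrac{1}{2},\tfrac{1}{2}\right],$$
so that the claim reduces to $\liminf_{\alpha_+\to\infty}\inf_x(F_+(x)-F_-(x))\geq -\tfrac{1}{2}$. Writing $F_+(x)=u/(1+u^2)$ with $u=(x-1)/\alpha_+$, for any sequence $\alpha_{+,n}\to\infty$ and near-minimisers $x_n$ I would pass to a subsequence in one of two regimes. If $(x_n)$ stays bounded, then $u_n\to 0$, so $F_+(x_n)\to 0$ and the bracket satisfies $F_+(x_n)-F_-(x_n)\geq -F_-(x_n)+o(1)\geq -\tfrac{1}{2}+o(1)$. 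If $|x_n|\to\infty$, then $F_-(x_n)\to 0$ (since $\alpha_-$ is fixed) and, because $F_+\geq -\tfrac{1}{2}$ everywhere, the bracket is again $\geq -\tfrac{1}{2}+o(1)$. In either case $\liminf(F_+(x_n)-F_-(x_n))\geq -\tfrac{1}{2}$, giving $\liminf_{\alpha_+\to\infty}\inf_x W^e_{closed}(S(c^x))\geq 12\pi-2\pi=10\pi$.

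The main obstacle is not Theorem \ref{1_2} itself but the underlying formula \eqref{eq:3_22}: one must parametrise the family of Moebius-transformed catenoid arcs meeting the prescribed horizontal boundary data and terminating at $(x,0)$, compute their hyperbolic bending integrals in closed form, and, crucially, track the density at infinity, which can be nonzero when the Moebius pre-image of infinity leaves the rotation axis. Once \eqref{eq:3_22} is in hand, the two-step derivation of Theorem \ref{1_2} above is elementary.
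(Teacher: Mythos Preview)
Your proposal is correct and follows essentially the same route as the paper: the explicit formula is obtained exactly by combining \eqref{eq:3_22} with \eqref{eq:2_7} and adding the hemisphere contributions, and the asymptotic is proved by the same two-case argument (bounded versus unbounded near-minimisers $x_{\alpha_+}$) together with the elementary bound $|F_\pm|\leq \tfrac{1}{2}$, which is precisely the Young-inequality step in the paper's proof of \eqref{eq:3_24}. One small remark: in the horizontal-clamping setting with finite $x$ all three pieces of $S(\gamma_{closed})$ are bounded, so the densities at infinity in \eqref{eq:1_8} vanish and there is nothing extra to track there; the subtlety you flag only enters for $x=\infty$, which in your asymptotic argument is absorbed into the unbounded-$x_n$ case.
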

 The asymptotic behaviour directly follows from \eqref{eq:3_24} and \eqref{eq:2_7}. 
 Since $x\mapsto W^e_{closed}(S({c^x}))$ is a rational function, the absolute minimiser can be calculated by numeric means by searching for a zero in the first derivative, which can be reduced to finding the root of a polynomial. 
 We collect some of these result in the following graphs, see Figures \ref{fig:1_2}, \ref{fig:1_3} and \ref{fig:1_4}.
 \begin{figure}[h]
 \centering 
\includegraphics{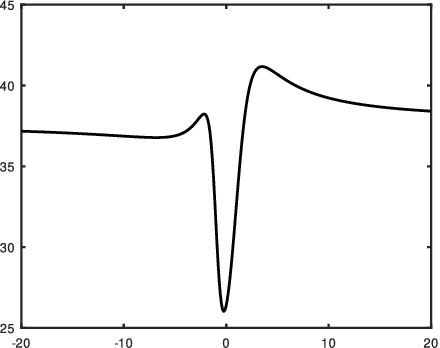}  
\caption{Plot of $x\mapsto W^e_{closed}(S({c^x}))$ für $\alpha_-=1$ and $\alpha_+=2$.}
\label{fig:1_2}
\end{figure}
\begin{figure}[h]
 \centering 
\includegraphics{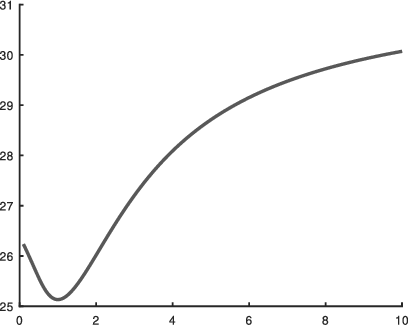}  
\caption{Plot of $\alpha_+\mapsto \inf_x W^e_{closed}(S({c^x}))$ for $\alpha_-=1$.}
\label{fig:1_3}
\end{figure}
\begin{figure}[h]
 \centering 
\includegraphics{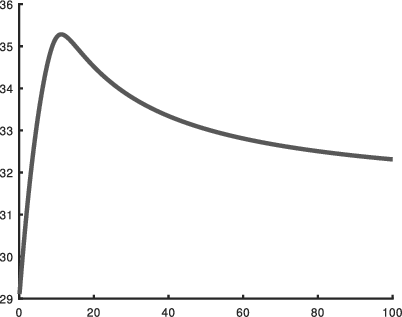}  
\caption{Plot of $\alpha_+\mapsto \inf_x W^e_{closed}(S({c^x}))$ for $\alpha_-=10$.}
\label{fig:1_4}
\end{figure}

\section{Geometric background}
\label{sec:2}
We collect some necessary geometric formulae for our calculations. These are from \cite{ChristianBaer} (cf. also \cite[§ 2.1]{EichmannPHD}):\\
Let $\gamma\in W^{2,2}([0,1],\R\times(0,\infty))$. We define the corresponding surface of revolution $S(\gamma)$ by \eqref{eq:1_4}. Then the metric tensor of this surface is given by
\begin{equation}
 \label{eq:2_1}
(g_{ij})_{i,j=t,\varphi} = \left(\begin{array}{cc} (d_s\gamma^1)^2 + (d_s\gamma^2)^2 & 0 \\ 0 & (\gamma^2)^2\end{array}\right).
\end{equation}
We introduce the Radon measure $\mu_\gamma$ on $\R$ (cf. \cite[Eq. 2.2]{ChoksiVeroni})
\begin{equation}
 \label{eq:2_2}
 \mu_\gamma:= 2\pi \gamma^2|d_s\gamma|\mathcal{L}^1\lfloor[0,1],
\end{equation}
which represents the integration on the surface $S(\gamma)$ in coordinates.
We call the principal curvatures of $S(\gamma)$ by $k_1^\gamma$, $k_2^\gamma$. Usually one would have to provide an orientation to calculate these object, but since all of our formula are independent of this choice, we can neglect this. For example the square of the scalar mean curvature is given as
\begin{equation}
 \label{eq:2_3}
 H^2 = (k_1^\gamma + k_2^\gamma)^2.
\end{equation}
Next we introduce the hyperbolic half plane $\Halb:=\{(x,y)\in\R^2|\ y>0\}$ equipped with the metric
\begin{equation}
 \label{eq:2_4}
ds^2=\frac{dx^2+dy^2}{y^2}.
\end{equation}
With respect to this metric (which we call $g_h$) we can define the geodesic curvature $\kappa_h[\gamma]$ (the $h$ stands for hyperbolic) of $\gamma$ by
\begin{equation}
 \label{eq:2_5}
 \kappa_h[\gamma] = \frac{g_h(\nabla_{d_s\gamma}d_s\gamma,N_\gamma)}{g_h(d_s\gamma,d_s\gamma)}.
\end{equation}
Here $N_\gamma$ is a choosen unit normal of $\gamma$ w.r.t. to $g_h$ and $\nabla_{\cdot}\cdot$ is the covariant derivative w.r.t. $g_h$. Then the (hyperbolic) elastic energy  (see e.g. \cite{LavienHistoryElastica} for a historical overview of elastic curves) is defined by
\begin{equation}
 \label{eq:2_6}
W_h(\gamma) := \int_0^1(\kappa_h[\gamma](t))^2\, ds(t).
\end{equation}
The Willmore energy and the elastic energy are connected by the following formula, which was first discovered in \cite{BryantGriffiths}.
\begin{equation}
 \label{eq:2_7}
\frac{2}{\pi} W_e(S(\gamma)) = W_h(\gamma) -  4\left[\frac{d_s\gamma^2}{\sqrt{(d_s\gamma^1)^2 + (d_s\gamma^2)^2}}\right]_0^1.
\end{equation}
If a curve $\gamma$ is critical w.r.t. to $W_h$, then $S(\gamma)$ also critical w.r.t. to $W_e$,
see e.g. \cite[Lemma 8.2]{PolyHarmBoundValue}.\\
Furthermore we call
\begin{equation}
 \label{eq:2_8}
 \mathcal{L}_{\Halb}(\gamma)=\int_0^1 |d_s\gamma(\ell)|\, ds(\ell)
\end{equation}
the hyperbolic length of the curve $\gamma$.

Let $x_0\in\R$, $\alpha_0>0$ and $\beta_0\in\R$. We define sphere caps $\scap(x_0,\alpha_0,\beta_0)$ (see Figure \ref{fig:1_1}) by rotating part of a circle (cf. \cite[Eq. 2.4]{EichmannSchaetzleWillmoreExist23})
 given by 
 \begin{equation}
  \label{eq:2_9}
  \begin{array}{c}\gamma_{circ}(x_0,\alpha_0,\beta_0):=\{(x,y)\in\Halb|\ |x-x_0 - \alpha_0 \tan\beta_0|^2 + y^2 = \frac{\alpha_0^2}{\cos^2\beta_0},\\ \cos\beta_0(x-x_0)\leq 0\}\end{array}
 \end{equation}
in case $\cos\beta_0\neq 0$. For $\cos\beta_0=0$ we put
\begin{equation}
 \label{eq:2_10}
 \gamma_{circ}(x_0,\alpha_0,\beta_0):=\{(x_0,y)|\ y>0,\ (\alpha_0-y)\sin\beta_0 > 0\},
\end{equation}
i.e. $\gamma_{closed}$ is a vertical line.

\section{Asymptotic geodesic elastica}
\label{sec:3}

Here we examine Moebius transformed catenoids $\gamma$. The classification in \cite[table 2.7 c)]{LangerSinger1} yields these to have hyperbolic curvature of the form
\begin{equation}
\label{eq:3_0_1}
 \kappa_h[\gamma](s)=\pm\frac{2}{\cosh(s+s_0)}
\end{equation}
for some $s_0\in\R$. Such curves are also called asymptotic geodesic (see e.g. \cite{LangerSinger1}). Furthermore they have finite elastic energy $W_h$, they approach the $x$-axis or $\infty$ and they are critical w.r.t. $W_h$. Hence they are prime candidates for rotational symmetric Willmore surfaces with a singularity.\\
Given an $s_0$ and an $\alpha>0$ we will solve the following Frenet equation in $\Halb$
\begin{equation}
 \label{eq:3_1}
 \left\{\begin{array}{c}\nabla_{d_s\gamma}d_s\gamma(s) = \pm \frac{2}{\cosh(s+s_0)} N_\gamma\\ \gamma(0)=(0,\alpha),\ d_s\gamma(0)=(\alpha,0)\end{array}\right.
\end{equation}
By Lemma \ref{3_1} this will exhaust all possibilities for Moebius transformed catenoids.
Further the solution $\gamma$ is parametrised by hyperbolic arclength.\\
The idea now is to use Moebius transformations of the upper half plane to find an explicit solution. Since we have two explicit solutions for $s_0=0$ and $\alpha=1$, we can find any solution. These special curves are given by
\begin{equation}
 \label{eq:3_2}
 s\mapsto (s,\cosh(s)),\ s\mapsto \frac{1}{s^2 + \cosh^2(s)}(s,\cosh(s)).
\end{equation}
This can be seen by direct computation as in e.g. \cite[§ 7.4]{EichmannDiplom}. \\
Any orientation preserving Moebius transformation of the upper half plane is given in complex coordinates $z\in\C$ by
\begin{equation}
 \label{eq:3_3}
 \varphi(z)= \frac{a z + b}{c z +d}
\end{equation}
with
\begin{equation}
 \label{eq:3_4}
 a,b,c,d\in\R,\ ad - bc > 0.
\end{equation}
It is also an isometry of the hyperbolic half plane (see e.g. \cite[Thm 4.6.2 and Thm. 4.6.7]{Ratcliffe}).
Now let $p\in\Halb$ and $V\in T_p\Halb$ with $|V|_g=1$. We now seek a Moebius Transformation $\varphi$ with
\begin{equation}
 \label{eq:3_5}
 \varphi(p)=(0,\alpha),\ \varphi'(p)(V) = (\alpha,0).
\end{equation}
If we have achived this, we will have solved \eqref{eq:3_1} by using \eqref{eq:3_2} to define $p$ as the point of the curve in $s_0$ and $V$ as the derivative of that curve in $s_0$. These type of computations have been done in e.g. \cite[Lemma 2.9]{MuellerSpenerElasticFlow}, but we need an explicit version here. Hence we repeat them for the readers convenience. (see e.g. \cite[Thm. 6.7]{EichmannDiplom} for a similar result by using inversions on spheres).\\
We write $p$ and $V$ in complex numbers as
$$ p = p_1 + i p_2,\ V=V_1 + i V_2.$$
The above requirements now mean
\begin{equation}
 \label{eq:3_6}
 p_2 >0\mbox{ and } V_1^2 + V_2^2 = p_2^2.
\end{equation}
To ease the calculations, we assume by composing with 
\begin{equation}
\label{eq:3_7}
  z \mapsto z - p_1,
\end{equation}
that $p_1=0$.
If $c\neq 0$, we can assume without loss of generality that $c=1$. Since $c=0$ corresponds to a linear transformation and since $a\in\R$ we would not loose any significant solutions apart from the ones already given in \eqref{eq:3_2}.
We now calculate
\begin{equation}
\label{eq:3_8}
 \varphi'(z)= \frac{a}{z+d} - \frac{az + b}{(z+d)^2} = \frac{az + ad -az -b}{(z+d)^2}=\frac{ad-b}{(z+d)^2}
\end{equation}
Now we have to satisfy the following condition
\begin{equation}
 \label{eq:3_9}
 \alpha = \varphi'(ip_2)(V_1 + i V_2).
\end{equation}
Seperating the right hand side in real and imaginary part yields
\begin{align*}
 &\varphi'(ip_2)(V_1 + i V_2)= (ad-b)\frac{V_1+ i V_2}{(ip_2 + d)^2}\\
 =& (ad-b)\frac{V_1 + iV_2}{d^2 + 2ip_2 d - p_2^2} = (ad-b)(V_1 + iV_2)\frac{d^2 - p_2^2 - 2ip_2 d}{(d^2 - p_2^2)^2 + 4 p_2^2 d^2}\\
 =& \frac{ad - b}{(d^2 - p_2^2)^2 + 4p_2^2d^2}(V_1 d^2 - V_1 p_2^2 + 2V_2 p_2 d +i(-2V_1p_2 d - V_2 p_2^2+ V_2 d^2))
\end{align*}
Since the prefactor is real and the imaginary part should vanish, we obtain by a solution formula for zeros of second order polynomials
\begin{align*}
 &0=-2V_1p_2 d - V_2 p_2^2+ V_2 d^2\\
 \Rightarrow& d^2 - 2\frac{V_1}{V_2}p_2 d - p_2^2 =0\\
 \Rightarrow& d_{1/2} = \frac{V_1}{V_2}p_2 \pm \sqrt{\frac{V_1^2}{V_2^2}p_2^2 + p_2^2} = \frac{V_1}{V_2}p_2 \pm \sqrt{\frac{V_1^2p_2^2 + V_2^2 p_2^2}{V_2^2}}\\
 &\overset{\eqref{eq:3_6}}{=} \frac{V_1 p_2}{V_2} \pm \sqrt{\frac{p_2^4}{V_2^2}}= \frac{V_1}{V_2}p_2\pm \frac{p_2^2}{|V_2|}.
\end{align*}
W.l.o.g. we can assume $V_2\neq 0$. Otherwise by scaling  \eqref{eq:3_2} would already yield our desired solution. \\
Now we have to decide a sign for this $\pm$: The real part of \eqref{eq:3_9} is supposed to be positive. Since the prefactor in the above calculation is supposed to be positive as well, we can employ the equation for $d$ and obtain
\begin{align*}
 0<& V_1 d^2 - V_1 p_2^2 + 2V_2 p_2 d = V_1\left(d^2 - p_2^2 + 2\frac{V_2}{V_1}p_2 d\right) = V_1\left(2\frac{V_1}{V_2} p_2 d + 2\frac{V_2}{V_1} p_2 d\right)\\
 =&  2p_2 V_1 d \left(\frac{V_1}{V_2} + \frac{V_2}{V_1}\right) = 2p_2^2V_1\left(\frac{V_1}{V_2}\pm \frac{p_2}{|V_2|}\right)\left(\frac{V_1}{V_2} + \frac{V_2}{V_1}\right)\\
 =& 2p_2^2 V_1\frac{V_1^2 + V_2^2}{V_2 V_1}\frac{V_1|V_2| \pm p_2 V_2}{V_2|V_2|} = 2p_2^2(V_1^2 + V_2^2)\frac{V_1|V_2| \pm p_2 V_2}{V_2^2|V_2|}
\end{align*}
By \eqref{eq:3_6} we have $|V_1 V_2|\leq p_2 |V_2|$. Hence we have to choose the sign as follows:
\begin{equation}
 \label{eq:3_10}
 d= \frac{V_1}{V_2}p_2 + \operatorname{sgn}(V_2) \frac{p_2^2}{|V_2|} = \frac{V_1 p_2 + p_2^2}{V_2}
\end{equation}
The condition on $p$ yields
\begin{align*}
 i\alpha =& \varphi(ip_2) = \frac{aip_2 + b}{ip_2 + d} = \frac{(aip_2 + b)(d-ip_2)}{d^2 + p_2^2}\\
 =&\frac{1}{d^2 + p_2^2}\left( ai p_2 d + ap_2^2 + bd - i b p_2\right).
\end{align*}
This yields the following two equations:
\begin{align*}
 \alpha (d^2 + p_2^2) =& ap_2 d - b p_2\\
 0=& a p_2^2 + bd.
\end{align*}
The second equation yields $a= - \frac{bd}{p_2^2}$ (please note $p_2>0$). By inserting this into the other equation, we obtain
$$\alpha(d^2 + p_2^2) = - \frac{bd^2}{p_2} - b p_2,$$
which in turn gives us
$$b = - \left(\frac{d^2}{p_2} + p_2\right)^{-1} \alpha (d^2 + p_2^2) = -\frac{p_2\alpha}{d^2 + p_2^2}(d^2 + p_2^2)=-p_2\alpha.$$
Since
\begin{align*}
& ad - b = -b\left(\frac{d^2}{p_2^2} + 1\right) >0,
\end{align*}
these $a,b,d$ satisfy \eqref{eq:3_4} and therefore we have found an eligible Moebius transformation.\\ 
Collecting all of our results and considering \eqref{eq:3_7}, we have
\begin{equation}
 \label{eq:3_11}
 d= \frac{V_1 p_2 + p_2^2}{V_2} - p_1,\ b = -\alpha p_2 - \frac{V_1  + p_2}{V_2}p_1\alpha,\ a = \frac{V_1  + p_2}{V_2}\alpha.
\end{equation}
Applying these results twice also yields the following lemma:
\begin{lemma}[cf. \cite{EichmannDiplom} Thm. 6.7, cf. \cite{MuellerSpenerElasticFlow} Lemma 2.9]
 \label{3_1}
 Let $q_1,q_2\in \Halb$ and $W_1\in T_{q_1}\Halb, W_2\in T_{q_2}\Halb$ with $|W_{1/2}|_g=1$. Then there exists an orientation preserving Moebius transformation $\Phi:\Halb\rightarrow\Halb$, i.e. an isometry of $\Halb$, such that
 $$\Phi(q_1)=q_2,\ d\Phi(q_1).W_1 = W_2.$$
\end{lemma}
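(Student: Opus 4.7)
The plan is to apply the explicit construction preceding the lemma twice and then compose. Fix the reference configuration $(q_*, W_*) := ((0,1),(1,0))$, obtained by setting $\alpha = 1$ in the formulas \eqref{eq:3_11}. For each $i \in \{1,2\}$ I construct an orientation preserving Moebius transformation $\varphi_i : \Halb \to \Halb$ such that
$$\varphi_i(q_i) = q_*, \qquad d\varphi_i(q_i).W_i = W_*.$$
When the second component of $W_i$ is nonzero, the coefficients $a,b,d$ read off from \eqref{eq:3_11} together with $c = 1$ provide $\varphi_i$ directly; the inequality $ad - b > 0$ verified in the preceding derivation yields orientation preservation.

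Once both $\varphi_i$ are in hand, set $\Phi := \varphi_2^{-1} \circ \varphi_1$. The orientation preserving Moebius transformations of $\Halb$ form a group: the condition $ad - bc > 0$ is multiplicative under composition (up to the positive scalar $\det$) and preserved under inversion, which corresponds to the matrix inverse. Hence $\Phi$ is again an orientation preserving Moebius transformation. The chain rule then gives
$$\Phi(q_1) = \varphi_2^{-1}(\varphi_1(q_1)) = \varphi_2^{-1}(q_*) = q_2,$$
while, using $d\varphi_2^{-1}(q_*) = (d\varphi_2(q_2))^{-1}$,
$$d\Phi(q_1).W_1 = d\varphi_2^{-1}(q_*).(d\varphi_1(q_1).W_1) = d\varphi_2^{-1}(q_*).W_* = W_2.$$

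The only genuine subtlety is the case excluded by the preceding computation, namely when $W_i$ has vanishing second component. After the translation $z \mapsto z - (\mathrm{Re}\, q_i)$, which is an orientation preserving Moebius transformation, the pair becomes $((0, p_2), (\pm p_2, 0))$ with $p_2 > 0$. In the $+$ case the dilation $z \mapsto z/p_2$ already lands on $(q_*, W_*)$; in the $-$ case I first precompose with $z \mapsto -1/z$, which has $a = 0, b = -1, c = 1, d = 0$ and therefore $ad - bc = 1 > 0$, and then dilate. The resulting $\varphi_i$ is again orientation preserving as a composition of orientation preserving Moebius transformations. This degenerate case is the main (and, really, only) place where the proof extends beyond the preceding calculation, and it reduces to a handful of explicit determinant checks, so no further analytic work is needed.
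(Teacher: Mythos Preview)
Your proof is correct and follows exactly the approach the paper indicates: the paper's entire argument is the line ``Applying these results twice also yields the following lemma,'' and you carry this out by sending each $(q_i,W_i)$ to a common reference frame via the preceding explicit construction and composing $\varphi_2^{-1}\circ\varphi_1$. Your treatment of the degenerate case $V_2=0$ is more explicit than the paper's (which simply remarks that scaling \eqref{eq:3_2} handles it), but the underlying idea is the same.
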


Now we set
\begin{equation}
 \label{eq:3_12}
 p=(s_0,\cosh s_0),\ V=(1,\sinh(s_0))
\end{equation}
and calculate $\lim_{s\rightarrow\infty}\varphi\circ(s,\cosh(s))$, which will be the singularity on the $x$-axis we are looking for:
\begin{align*}
 \varphi(s,\cosh s)=\frac{a(s+i\cosh s) + b}{s+i\cosh s + d} \rightarrow a\mbox{ for }s\rightarrow\infty.
\end{align*}
This defines a function for the singularity $x(s_0)$:
\begin{equation}
 \label{eq:3_13}
 x(s_0) := \frac{1 +\cosh(s_0)}{\sinh(s_0)}\alpha.
\end{equation}
(Please note, that $s_0=0$, i.e. $V_2=0$, will be taken into account later).
Calculating the derivative yields
\begin{align*}
 d_{s_0}x(s_0)=&\alpha\left(\frac{\sinh(s_0)}{\sinh(s_0)} - (1+\cosh(s_0))\frac{\cosh(s_0)}{\sinh^2(s_0)}\right)\\
 =&\alpha\frac{\sinh^2(s_0) - \cosh(s_0) - \cosh^2(s_0)}{\sinh^2(s_0)} = -\alpha\frac{1+\cosh(s_0)}{\sinh^2(s_0)}<0.
\end{align*}
Hence it is strictly monotone and therefore invertible. Furthermore
\begin{equation}
 \label{eq:3_14}
 \lim_{s_0\rightarrow \pm 0}x(s_0)=\pm\infty,\ \lim_{s_0\rightarrow\pm\infty}x(s_0)=\pm\alpha.
\end{equation}
Hence we can reach any singularity in $(-\infty,-\alpha)\cup(\alpha,\infty)$.
Now applying an inversion at the sphere with center $0$ and radius $\alpha$ to $\varphi(s,\cosh(s))$ yields another elastica (see e.g. Figure \ref{fig_1}).
\begin{figure}[h]
 \centering 
\includegraphics{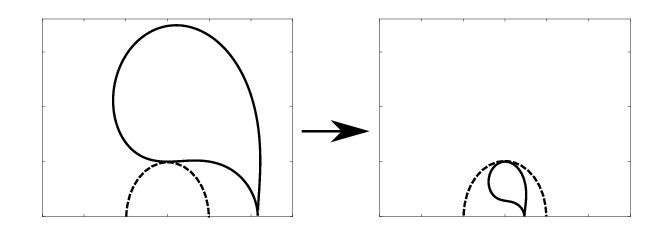}  
\caption{Inverting $\varphi(s,\cosh(s))$ on sphere with center zero and radius $\alpha$.}
\label{fig_1}
\end{figure}
The only change will be a change of sign in the geodesic curvature (cf. \eqref{eq:3_1}). Hence these elastica enable us to reach the interval $(-\alpha,\alpha)$. The points $\pm\alpha$ can be reached by half circles, which are geodesics and therefore elastica as well. Together with the uniqueness of solutions to the initial value problem for the Frenet equations and \cite[table 2.7 c)]{LangerSinger1} this yields
\begin{lemma}
 \label{3_2}
 For every $\alpha>0$ and every $x\in\R-\{\pm\alpha\}$ exists exactly one elastic curve $\gamma:\R\rightarrow\Halb$ with finite elastic energy, such that
 \begin{equation}
 \label{eq:3_15}
  \gamma(0)=(0,\alpha),\ d_s\gamma(0)=(\alpha,0),\ \lim_{s\rightarrow\infty} \gamma(s)=(x,0).
 \end{equation}
 This elastica is of asymptotic geodesic type, i.e. its curvature satisfies \eqref{eq:3_0_1}.\\
 Furthermore if $x=\pm\alpha$, there exists exactly one half circle connecting $(0,\alpha)$ with $(\pm\alpha,0)$, which is also an elastica.
\end{lemma}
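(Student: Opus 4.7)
The plan is to combine the explicit Moebius-transform construction already carried out above with the classification result \cite[table 2.7 c)]{LangerSinger1}, which asserts that every finite-energy elastica in $\Halb$ is either a geodesic or an asymptotic geodesic with curvature of the form \eqref{eq:3_0_1}. Existence reduces to checking that the computed singularity function exhausts all of $\R\setminus\{\pm\alpha\}$, while uniqueness reduces to strict monotonicity of this function together with unique solvability of the Frenet initial value problem.

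For existence, I would split into three cases. For $x\in\R\setminus[-\alpha,\alpha]$, the explicit Moebius transformation with coefficients \eqref{eq:3_11} applied to the catenoid-like curve $(s,\cosh s)$, with base point $p$ and tangent $V$ chosen as in \eqref{eq:3_12}, produces an elastica satisfying \eqref{eq:3_1} with the $+$ sign and limit $(x(s_0),0)$; the strict monotonicity of $x(s_0)$ together with the limits \eqref{eq:3_14} gives a bijection onto $(-\infty,-\alpha)\cup(\alpha,\infty)$. For $x\in(-\alpha,\alpha)$, I apply the inversion on the circle of radius $\alpha$ centred at $0$. This map is an isometry of $\Halb$ preserving the prescribed initial data in \eqref{eq:3_1} but flipping the sign of the geodesic curvature, and it sends a singularity $(x,0)$ to $(\alpha^2/x,0)$; composing with the previous family therefore realises the $-$ sign elastica and covers every interior singularity, with $x=0$ coming from inverting the untransformed curve $(s,\cosh s)$ itself, i.e.\ the second explicit solution in \eqref{eq:3_2}. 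Finally, for $x=\pm\alpha$, the half-circle through $(0,\alpha)$ with unit hyperbolic tangent $(\alpha,0)$ is a hyperbolic geodesic, hence an elastica with vanishing curvature, and its two endpoints on the $x$-axis are precisely $\pm\alpha$.

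For uniqueness, by \cite[table 2.7 c)]{LangerSinger1} every competitor with finite elastic energy is either a geodesic or has curvature of the form \eqref{eq:3_0_1}. The unique geodesic matching the initial conditions is the half-circle described above, whose limit on the $x$-axis is only $\pm\alpha$. In the asymptotic geodesic case, the Frenet system \eqref{eq:3_1} has a unique global solution once the sign and the shift $s_0$ are fixed, so the constructed two-parameter family is exhaustive; on each sign branch the map $s_0\mapsto x(s_0)$ is a bijection onto its range, and the two branches have disjoint ranges $\R\setminus[-\alpha,\alpha]$ respectively $(-\alpha,\alpha)$, so a given singularity $x$ forces a unique pair $(s_0,\pm)$ and hence a unique $\gamma$. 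The main obstacle I expect is matching normalisations: one has to verify that the classification in \cite{LangerSinger1}, which is stated up to ambient isometry of $\Halb$, genuinely reduces every admissible candidate to a member of the constructed one-parameter family, and one must confirm that the finite elastic energy hypothesis singles out the asymptotic geodesic subfamily rather than other critical points of $W_h$ appearing in that classification.
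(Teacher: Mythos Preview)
Your proposal is correct and follows essentially the same route as the paper: the paper's argument is precisely the text preceding Lemma~\ref{3_2}, using the explicit Moebius construction \eqref{eq:3_11}--\eqref{eq:3_14} for $|x|>\alpha$, the inversion at the circle of radius $\alpha$ for $|x|<\alpha$, half-circles for $x=\pm\alpha$, and then uniqueness via the Frenet initial value problem combined with the Langer--Singer classification. Your write-up is slightly more explicit about why the two sign branches have disjoint ranges and about the $x=0$ case, but the underlying ideas coincide.
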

\noindent Now we like to calculate the actual elastic energy of such asymptotic geodesics. Therefore we have to explictly compute $x^{-1}$ (Existence is guaranteed by $d_{s_0}x<0$, see above). Afterwards we will also obtain a formula for the inverted object, see Figure \ref{fig_1}. 
To this end \eqref{eq:3_13} yields
\begin{align*}
  0=&\alpha(1+\cosh(s_0)) - x(s_0)\sinh(s_0) = \alpha + \frac{\alpha}{2}\left(e^{s_0} + e^{-s_0}\right) - x(s_0)\frac{e^{s_0}- e^{-s_0}}{2}.
\end{align*}
Multiplying by $e^{s_0}$ yields
\begin{equation*}
 \alpha e^{s_0} + \frac{\alpha}{2}\left(e^{s_0}\right)^2 + \frac{\alpha}{2} - \frac{x(s_0)}{2}\left(e^{s_0}\right)^2 + \frac{x(s_0)}{2}=0.
\end{equation*}
By rearranging we obtain 
\begin{equation*}
 \frac{\alpha - x(s_0)}{2}\left(e^{s_0}\right)^2 + \alpha e^{s_0} + \frac{\alpha+x(s_0)}{2} =0. 
\end{equation*}
By \eqref{eq:3_14} we can assume $\alpha\neq x(s_0)$ (This would be the case for the half sphere anyway) and then
\begin{equation*}
 \left(e^{s_0}\right)^2 + \frac{2\alpha}{\alpha-x(s_0)} e^{s_0} + \frac{\alpha + x(s_0)}{\alpha - x(s_0)}=0.
\end{equation*}
Completing the square then yields
\begin{align*}
 \left(e^{s_0}\right)_{\pm} =& -\frac{\alpha}{\alpha - x(s_0)}\pm\sqrt{\frac{\alpha^2}{(\alpha - x(s_0))^2} - \frac{\alpha + x(s_0)}{\alpha - x(s_0)}}\\
 =& -\frac{\alpha}{\alpha - x(s_0)}\pm\sqrt{\frac{\alpha^2 - \alpha^2 + x(s_0)^2}{(\alpha - x(s_0))^2}}\\
 =& -\frac{\alpha}{\alpha -x(s_0)} \pm\frac{|x(s_0)|}{|\alpha - x(s_0)|}. 
\end{align*}
Since $|x(s_0)| > \alpha$ and we also have to perform $\log$ on both sides of the equation, the positive solution is the only viable one. Hence
\begin{equation}
 \label{eq:3_16}
 e^{s_0} = -\frac{\alpha}{\alpha -x(s_0)} +\frac{|x(s_0)|}{|\alpha - x(s_0)|}.
\end{equation}
If $x(s_0)>0$, then
\begin{equation*}
 e^{s_0} =-\frac{\alpha}{\alpha -x(s_0)} + \frac{x(s_0)}{x(s_0) - \alpha} = -\frac{\alpha + x(s_0)}{\alpha - x(s_0)}. 
\end{equation*}
On the other hand if $x(s_0)<0$
\begin{equation*}
 e^{s_0} =-\frac{\alpha}{\alpha -x(s_0)} + \frac{-x(s_0)}{-x(s_0) + \alpha} = -\frac{\alpha + x(s_0)}{\alpha - x(s_0)}.
\end{equation*}
Hence all in all we obtain
\begin{equation}
 \label{eq:3_17}
 s_0(x)=\log\left(-\frac{\alpha + x}{\alpha - x}\right).
\end{equation}
Let us now turn to the inversion at a sphere with radius $\alpha$ and centre $0$. It is given by (see e.g. \cite[Eq. 4.1.2]{Ratcliffe})
\begin{equation*}
  z\mapsto \frac{\alpha^2}{|z|^2} z.
\end{equation*}
For any point on the $x$-axis ($z=(0,x)$) with $|x|>\alpha$ this yields
\begin{equation*}
 x_{inv}:= \frac{\alpha^2}{x}.
\end{equation*}
Hence the formula for the inverted asymptotic geodesic is
\begin{equation}
 \label{eq:3_18}
 s_0^{inv} := \log\left(-\frac{\alpha + x_{inv}}{\alpha - x_{inv}}\right) = \log\left(-\frac{\alpha + \frac{\alpha^2}{x}}{\alpha - \frac{\alpha^2}{x}}\right)
 = \log\left(-\frac{x + \alpha}{x-\alpha}\right).
\end{equation}
In both cases, the square of the geodesic curvature function is of the form
$$\kappa(s)^2 = \frac{4}{\cosh(s)^2},$$
hence the energies are
$$E_+(\alpha):=\int_{-\infty}^{s_0} \frac{4}{\cosh(s)^2}\, ds \mbox{ and } E_-(\alpha):=\int_{s_0}^{\infty} \frac{4}{\cosh(s)^2}\, ds$$
depending on the direction we start at the initial value. Evaluating the first integral yields
\begin{equation}
\label{eq:3_19}
 E_+(\alpha)=4\left[\frac{\sinh s}{\cosh s}\right]_{-\infty}^{s_0} = 4\frac{\sinh s_0}{\cosh s_0} + 4
\end{equation}
and similarly the second integral is
\begin{equation}
 \label{eq:3_20}
 E_-(\alpha) = 4\left[\frac{\sinh s}{\cosh s}\right]_{s_0}^{\infty} =4- 4\frac{\sinh s_0}{\cosh s_0}.
\end{equation}
Now we turn to the following boundary value problem for a given $x\in\R$ and $\alpha_\pm>0$, cf. Theorem \ref{1_2}.
\begin{equation}
 \label{eq:3_21}
 \left\{\begin{array}{c}\mbox{Find two asymptotic geodesics }c^x_\pm:\R\rightarrow\Halb\mbox{ such that }\\ c^x_-(0)=(-1,\alpha_-),\ d_s c^x_-(0)=(\alpha_-,0), \lim_{s\rightarrow\infty}c^x_-(s)=(x,0)\\
         c^x_+(0)=(1,\alpha_+),\ d_s c^x_+(0)=(\alpha_+,0), \lim_{s\rightarrow-\infty}c_+^x(s)=(x,0).
        \end{array}\right. 
\end{equation}
In our formulas above we will have to replace $x$ by $x\mp 1$ depending on whether we start at the right or the left side. Instead of $s_0$ we call the starting point $s_\pm$. Then the elastic energy $W^h_{\alpha_-,\alpha_+}(x):= W_h(c^x_-\oplus c^x_+)$ is given with the help of \eqref{eq:3_19} and \eqref{eq:3_20} by
\begin{align*}
 W^h_{\alpha_-,\alpha_+}(x)=& 4\frac{\sinh s_+}{\cosh s_+} + 4 + 4- 4\frac{\sinh s_-}{\cosh s_-}\\
 \overset{\eqref{eq:3_17}/\eqref{eq:3_18}}{=}& 8 + 4\frac{\left|\frac{\alpha_+ + (x-1)}{\alpha_+ - (x-1)}\right| - \left|\frac{\alpha_+ - (x-1)}{\alpha_+ + (x-1)}\right|}{\left|\frac{\alpha_+ + (x-1)}{\alpha_+ - (x-1)}\right| + \left|\frac{\alpha_+ - (x-1)}{\alpha_+ + (x-1)}\right|} - 4 \frac{\left|\frac{\alpha_- + (x+1)}{\alpha_- -(x+1)}\right| - \left|\frac{\alpha_- - (x-1)}{\alpha_- + (x+1)}\right|}{\left|\frac{\alpha_- + (x+1)}{\alpha_- -(x+1)}\right| + \left|\frac{\alpha_- - (x-1)}{\alpha_- + (x+1)}\right|}\\
 =& 8 + 4\frac{(\alpha_+ + (x-1))^2 - (\alpha_+ - (x-1))^2}{(\alpha_++(x-1))^2 + (\alpha_+ - (x-1))^2}\\
 &- 4\frac{ (\alpha_- + (x+1))^2 - (\alpha_--(x-1))^2}{(\alpha_- + (x+1))^2 + (\alpha_- -(x+1))^2}\\
 =& 8 + 4\left(\frac{4\alpha_+(x-1)}{2\alpha_+^2 + 2(x-1)^2} - \frac{4\alpha_-(x+1)}{2\alpha_-^2 + 2(x+1)^2}\right).
\end{align*}
Hence the elastic energy is given by 
\begin{equation}
 \label{eq:3_22}
 W^h_{\alpha_-,\alpha_+}(x) = 8+ 8\left(\frac{\alpha_+(x-1)}{\alpha_+^2 + (x-1)^2} - \frac{\alpha_-(x+1)}{\alpha_-^2 + (x+1)^2}\right).
\end{equation}
By inspection this formula still holds true, if we use a half circle instead of an asymptotic geodesic, i.e. if $x=\pm 1 \mp \alpha_\pm$.
Hence by Lemma \ref{3_2}  the elastic energy of a solution to \eqref{eq:3_21} has to be given by \eqref{eq:3_22}.\\
We define the infimum to be
\begin{equation}
 \label{eq:3_23}
 W^{hmin}_{\alpha_-,\alpha_+} :=\inf\{W^h_{\alpha_-,\alpha_+}(x)|\ x\in\R\cup\{\infty\}\}. 
\end{equation}
Please note, that the case $x=\infty$ implies here $c^x_\pm$ to be of the form $s\mapsto \alpha_\pm\cosh(s/\alpha_\pm)$.\\
Now we also prove the claim for the asymptotic in Theorem \ref{1_2}, i.e. for $\alpha_->0$ fixated we have
\begin{equation}
 \label{eq:3_24}
 \liminf_{\alpha_+\rightarrow\infty} W^{hmin}_{\alpha_-,\alpha_+} \geq 4.
\end{equation}
\begin{proof}
Using a half circle and one asymptotic geodesic, we obtain by Lemma \ref{3_2}
$$W^{hmin}_{\alpha_-,\alpha_+} < 8.$$
 This and 
 $$\lim_{x\rightarrow\pm \infty} W^h_{\alpha_-,\alpha_+}(x)=8$$
yields that there exists an $x_{\alpha_+}\in\R$ such that
$$W^{hmin}_{\alpha_-,\alpha_+}=W_{\alpha_-,\alpha_+}(x_{\alpha_+}).$$
We distinguish two cases. The first is
\begin{equation}
 \label{eq:3_25}
 \limsup_{\alpha_+\rightarrow\infty}|x_{\alpha_+}|=\infty.
\end{equation}
After extracting a subsequence, we can actually assume to have convergence to $\infty$ in \eqref{eq:3_25}. Then Youngs inequality yields
\begin{align*}
 W_{\alpha_-,\alpha_+}(x_{\alpha_+}) =& 8+ 8\left(\frac{\alpha_+(x_{\alpha_+}-1)}{\alpha_+^2 + (x_{\alpha_+}-1)^2} - \frac{\alpha_-(x_{\alpha_+}+1)}{\alpha_-^2 + (x_{\alpha_+}+1)^2}\right)\\
 \geq & 8 - 4 - 8\frac{\alpha_-(x_{\alpha_+} +1)}{\alpha_-^2-(x_{\alpha_+}+1)^2}\rightarrow 4\mbox{ für }\alpha_+\rightarrow\infty,
\end{align*}
which handles the first case. The other case is
\begin{equation}
 \label{eq:3_26}
 \limsup_{\alpha_+\rightarrow\infty}|x_{\alpha_+}|<\infty.
\end{equation}
Hence we find a constant $C>0$, such that
$$x_{\alpha_+} \in[-C,C].$$
Hence we only need to examine $x\in[-C,C]$. Since we locally have uniform convergence, i.e.
$$W_{\alpha_-,\alpha_+}\rightarrow E_\infty$$
with
$$E_\infty(x):=8-8\frac{\alpha_-(x+1)}{\alpha_-^2 + (x+1)^2},$$
it is sufficient to find the minimisers of $E_\infty$:
Again by Youngs inequality, we have
$$E_\infty(x)\geq 8- 4\frac{\alpha_-^2 + (x+1)^2}{\alpha_-^2 + (x+1)^2}=4.$$
All in all this yields \eqref{eq:3_24}.
\end{proof}

\section{Minimising with a singular boundary value}
\label{sec:4}

\subsection{Variational Background}
\label{sec:4_1}
We repeat the necessary definitions and theorems of \cite{ChoksiVeroni} here, on which we build our minimising scheme in section \ref{sec:4_2}. 
The same procedure has also been used in \cite[§ 3]{EichmannSchaetzleWillmoreExist23}.\\
We start with a generalisation of profile curves, so called generalized generators.
They allow its corresponding surface of revolution to touch itself on the $x$-axis:
\begin{definition}[see Def. 3 in \cite{ChoksiVeroni}]
 \label{4_1}
 We say that $\gamma:[0,1]\rightarrow \R^2$ is a generalized generator, if $\gamma$ is Lipschitz continuous, $|d_s\gamma|\equiv \ell(\gamma)$ and $\gamma^2(t)>0$ for $\mathcal{L}^1$-almost every $t\in(0,1)$, $d^2_s\gamma\in L^1_{Loc}(\{\gamma^2>0\};\R^2)$ exists in a weak sense and there exists a $C>0$, such that
 \begin{equation}
  \label{eq:4_1}
  \int_0^1 (k_1^\gamma)^2 + (k_2^\gamma)^2\, d\mu_\gamma < C.
 \end{equation}
Here $\ell(\gamma)$ denotes the length of the curve $\gamma$ w.r.t. to the euclidean metric. $\kappa^\gamma_{1/2}$ are the principal curvatures of the corresponding surface of revolution and $\mu_\gamma$ is given by \eqref{eq:2_2}.
\end{definition}
Please note, that in \cite{ChoksiVeroni} the revolution of the curve was performed around the $y$-axis, hence the coordinates have been changed here to reflect this.\\
We have the following regularity:
\begin{lemma}[see Lemma 3 in \cite{ChoksiVeroni}]
 \label{4_2}
 Let $\gamma$ be a generalized generator as in Definition \ref{4_1}. Then for any subinterval $[a,b]\subset[0,1]\cap\{\gamma^2>0\}$ we have
 \begin{equation*}
  \gamma\in W^{2,2}((a,b),\R^2)\mbox{ and } d_s\gamma \mbox{ has a unique extension to }C^0([a,b],\R^2). 
 \end{equation*}
\end{lemma}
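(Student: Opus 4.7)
The plan is to first translate the assumed $L^2$-control on the principal curvatures into an $L^2$-control on $d_s^2\gamma$ over $[a,b]$, and then invoke the one-dimensional Sobolev embedding.

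Since $\gamma$ is parametrized at the constant speed $\ell := \ell(\gamma)$, differentiating $|d_s\gamma|^2 \equiv \ell^2$ gives $d_s\gamma \cdot d_s^2\gamma \equiv 0$, so $d_s^2\gamma$ is normal to $d_s\gamma$ and has magnitude $|d_s^2\gamma| = \ell^2 |k_m|$, where
$$k_m = \frac{d_s\gamma^1\, d_s^2\gamma^2 - d_s\gamma^2\, d_s^2\gamma^1}{\ell^3}$$
is the meridian principal curvature of $S(\gamma)$ (the other principal curvature being the parallel curvature $k_p = d_s\gamma^1/(\gamma^2 \ell)$). Since $\gamma$ is Lipschitz, hence continuous, and $\gamma^2 > 0$ on the compact set $[a,b]$, there exists $c_0 > 0$ with $\gamma^2 \geq c_0$ on $[a,b]$. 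Combining this with \eqref{eq:2_2} and \eqref{eq:4_1} and using $d\mu_\gamma = 2\pi \gamma^2 \ell\, dt$, I obtain
$$\int_a^b |d_s^2\gamma|^2\, dt = \ell^4 \int_a^b k_m^2\, dt \leq \ell^4 \int_a^b \bigl((k_1^\gamma)^2 + (k_2^\gamma)^2\bigr)\, dt \leq \frac{\ell^3}{2\pi c_0} \int_0^1 \bigl((k_1^\gamma)^2 + (k_2^\gamma)^2\bigr)\, d\mu_\gamma < \infty.$$
Together with the Lipschitz bound on $d_s\gamma$, this shows $\gamma \in W^{2,2}((a,b), \R^2)$. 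The continuous extension of $d_s\gamma$ to $[a,b]$ then follows from the one-dimensional Sobolev embedding $W^{2,2}((a,b)) \hookrightarrow C^{1,1/2}([a,b])$, and uniqueness of such an extension is automatic.

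The essential obstacle is upgrading the weak $L^1_{loc}$-information on $d_s^2\gamma$ assumed in Definition \ref{4_1} to the $L^2$-bound above. This improvement exploits two features simultaneously: the uniform lower bound $\gamma^2 \geq c_0$ on $[a,b]$ (which lets us strip the weight $\gamma^2$ out of $\mu_\gamma$) and the identity $|d_s^2\gamma| = \ell^2 |k_m|$, which arises only because $\gamma$ is parametrized at constant speed. Once these are in place, the rest of the argument is a routine application of one-dimensional Sobolev theory.
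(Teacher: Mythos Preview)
Your argument is correct. The paper does not give its own proof of this lemma; it simply quotes the result from \cite{ChoksiVeroni} (their Lemma~3). Your proof is the natural direct argument and is essentially what underlies the cited result: on $[a,b]$ the assumption $d_s^2\gamma\in L^1_{loc}(\{\gamma^2>0\})$ gives $d_s\gamma\in W^{1,1}((a,b))$, which justifies differentiating $|d_s\gamma|^2\equiv\ell^2$ to obtain $d_s\gamma\cdot d_s^2\gamma=0$ a.e.; the constant-speed identity $|d_s^2\gamma|=\ell^2|k_m|$ and the lower bound $\gamma^2\ge c_0$ then convert the weighted curvature bound \eqref{eq:4_1} into an unweighted $L^2$-bound on $d_s^2\gamma$, and Sobolev embedding finishes. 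One small wording slip: ``the Lipschitz bound on $d_s\gamma$'' should read ``the $L^\infty$-bound on $d_s\gamma$'' (coming from the Lipschitz continuity of $\gamma$), but this does not affect the argument.
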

Next we have the behaviour of the tangent on the $x$-axis:
\begin{lemma}[see Lemma 4 in \cite{ChoksiVeroni}]
 \label{4_3}
 Let $\gamma$ be a generalized generator as in Definition \ref{4_1}.
 Let $a,b\in[0,1]$ be such that $\gamma^2(a)=\gamma^2(b)=0$, $\gamma^2(t)>0$ for all $t\in(a,b)$. Then the limits of $d_s{\gamma}$ as $t\rightarrow a^+$ rsp. $t\rightarrow b^-$ exist and furthermore
 \begin{equation*}
  \lim_{t\rightarrow a^+}d_s{\gamma}^1(t) = \lim_{t\rightarrow b^-}d_s{\gamma}^1(t)=0.
 \end{equation*}
 Also either
 \begin{equation*}
   \lim_{t\rightarrow a^+}d_s{\gamma}^2(t) =\ell(\gamma),\  \lim_{t\rightarrow b^-}d_s{\gamma}^2(t)=-\ell(\gamma)
 \end{equation*}
or
\begin{equation*}
   \lim_{t\rightarrow a^+}d_s{\gamma}^2(t) =-\ell(\gamma),\  \lim_{t\rightarrow b^-}d_s{\gamma}^2(t)=\ell(\gamma)
 \end{equation*}
 holds.
\end{lemma}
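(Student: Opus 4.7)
The plan is to translate the curvature bound \eqref{eq:4_1} into two integrability statements for the tangent angle and then use them to force $d_s\gamma$ to have a well-defined limit pointing along the $y$-axis at each endpoint. For any subinterval $[c,d]\subset (a,b)$ Lemma \ref{4_2} gives $\gamma\in W^{2,2}((c,d);\R^2)$ and a continuous extension of $d_s\gamma$. Since $|d_s\gamma|\equiv \ell(\gamma)$, I can pick a continuous branch $\theta\in C^0((a,b))$ with $d_s\gamma=\ell(\gamma)(\cos\theta,\sin\theta)$. For a surface of revolution the principal curvatures are, up to sign, $\kappa_1^\gamma=(d_s\theta)/\ell(\gamma)$ and $\kappa_2^\gamma=\cos\theta/\gamma^2$, so with $d\mu_\gamma=2\pi\gamma^2\ell(\gamma)\,dt$ the hypothesis \eqref{eq:4_1} becomes
\begin{equation*}
\int_a^b (d_s\theta)^2\gamma^2\,dt<\infty\quad\text{and}\quad \int_a^b \frac{\cos^2\theta}{\gamma^2}\,dt<\infty.
\end{equation*}

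The core step is to show $\cos\theta(t)\to 0$ as $t\to a^+$. I would argue by contradiction: assume there exist $\epsilon>0$ and $t_n\searrow a$ with $|\cos\theta(t_n)|\geq \epsilon$, and set $\eta_n:=\gamma^2(t_n)\to 0$. On the intervals $I_n:=[t_n-\eta_n/(2\ell(\gamma)),\,t_n+\eta_n/(2\ell(\gamma))]$ the Lipschitz bound $|d_s\gamma^2|\leq \ell(\gamma)$ gives $\eta_n/2\leq \gamma^2\leq 3\eta_n/2$. Cauchy--Schwarz then yields
\begin{equation*}
|\theta(t)-\theta(t_n)|^2 \leq \frac{|I_n|}{2}\int_{I_n}(d_s\theta)^2\,d\tau \leq \frac{|I_n|}{\eta_n}\int_{I_n}(d_s\theta)^2\gamma^2\,d\tau = \frac{1}{\ell(\gamma)}\int_{I_n}(d_s\theta)^2\gamma^2\,d\tau,
\end{equation*}
and the right-hand side vanishes as $n\to\infty$ by absolute continuity of the integral, so $|\cos\theta|\geq \epsilon/2$ throughout $I_n$ for all large $n$. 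After passing to a subsequence making the $I_n$ disjoint, each $\int_{I_n}\cos^2\theta/\gamma^2\geq (\epsilon/2)^2 |I_n|/(3\eta_n/2)=\epsilon^2/(6\ell(\gamma))$, and the accumulated sum diverges, contradicting the finiteness of $\int\cos^2\theta/\gamma^2$.

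The main obstacle is precisely this passage from integral bounds to a pointwise limit: because $\gamma^2$ vanishes at $a$, neither weight $\gamma^2$ nor its inverse is controllable a priori on any uniform scale, and the argument must use an interval size dictated by $\gamma^2$ itself to simultaneously exploit both integrability statements.

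Once $\cos\theta(t)\to 0$ at both endpoints, $(d_s\gamma^1)^2+(d_s\gamma^2)^2\equiv \ell(\gamma)^2$ gives $|d_s\gamma^2(t)|\to \ell(\gamma)$, and continuity of $d_s\gamma^2$ on $(a,b)$ forbids a sign change close to either endpoint, so the two one-sided limits exist in $\{\pm\ell(\gamma)\}$. Finally, $\gamma^2>0$ on $(a,b)$ together with $\gamma^2(a)=\gamma^2(b)=0$ and the representation $\gamma^2(t)=\int_a^t d_s\gamma^2\,d\tau$ close to each endpoint force the two signs to be opposite, matching the dichotomy stated in the lemma.
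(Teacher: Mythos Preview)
The paper does not give its own proof of this lemma; it is quoted verbatim from \cite{ChoksiVeroni} (Lemma~4 there) and used as a black box. So there is nothing to compare against, and the relevant question is simply whether your argument is sound. It is: the translation of \eqref{eq:4_1} into the two weighted integrability conditions for $(d_s\theta)^2\gamma^2$ and $\cos^2\theta/\gamma^2$ is correct, and the scale-matching argument on the intervals $I_n$ of length $\eta_n/\ell(\gamma)$ is exactly the right idea to pit the two weights against each other. One small point you should make explicit is that $I_n\subset(a,b)$ for large $n$: the Lipschitz bound $|d_s\gamma^2|\le\ell(\gamma)$ together with $\gamma^2(a)=0$ gives $\eta_n\le\ell(\gamma)(t_n-a)$, hence $t_n-\eta_n/(2\ell(\gamma))\ge(t_n+a)/2>a$; without this the contradiction step could in principle spill over the endpoint. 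After that, your use of absolute continuity and the disjoint-subsequence trick is clean.

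For the final dichotomy, your argument actually shows more than you claim: since $\gamma^2(a)=0$, $\gamma^2>0$ on $(a,b)$ and $d_s\gamma^2$ has a definite sign near $a$, the limit at $a^+$ is forced to be $+\ell(\gamma)$ (and $-\ell(\gamma)$ at $b^-$), so only the first alternative in the lemma ever occurs under the stated hypotheses. This is consistent with the lemma, just sharper.
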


Next we define the convergence for a sequence of generators suitable for our variational endeavor.
\begin{definition}[see Def. 4 and Def. 6 in \cite{ChoksiVeroni}]
 \label{4_4}
 Let $\gamma_n$ be a sequence of generalized generators and $\gamma$ be a generalized generator as in Definition \ref{4_1} as well. We say $\gamma_n$ converges weakly as generators to $\gamma$, if and only if the following holds
 \begin{equation}
  \label{eq:4_2}
  \gamma_n\rightarrow\gamma\mbox{ uniformely in }C^0([0,1],\R^2), 
 \end{equation}
\begin{equation}
 \label{eq:4_3}d_s{\gamma}_n\rightarrow d_s{\gamma}\mbox{ strongly in }L^2((0,1),\R^2),
\end{equation}
\begin{equation}
\label{eq:4_4}
 \mu_{\gamma_n}\rightarrow \mu_\gamma \mbox{ weakly as measures}
\end{equation}
\begin{equation}
 \label{eq:4_5}
 \sup_{n\in\N}\int_{[0,1]}|d^2_s{\gamma}_n|^2\, d\mu_{\gamma_n} < \infty,
\end{equation}
\begin{equation}
 \label{eq:4_6}
 \mbox{for all }\varphi\in C^\infty_c(\R,\R^2)\mbox{ we have }\lim_{n\rightarrow\infty}\int d^2_s{\gamma}_n\cdot\varphi\, d\mu_{\gamma_n} = \int d^2_s{\gamma}\cdot\varphi\, d\mu_\gamma 
\end{equation}
\end{definition}
Please note, that \eqref{eq:4_6} together with \eqref{eq:4_4} and \eqref{eq:4_5} is also called convergence of the measure function pair $(d^2_s{\gamma}_n,\mu_{\gamma_n})$ to $(d^2_s{\gamma},\mu_\gamma)$, see e.g. \cite[§ 4]{Hutchinson}.
Now we turn our attention to lower-semicontinuity:
\begin{theorem}[see Prop. 1 in \cite{ChoksiVeroni}]
 \label{4_5} 
 Let $\gamma_n$ be sequence of generalized generators converging in the sense of Definition \ref{4_4} to a generalized generator $\gamma$. Then
 \begin{equation*}
  \liminf_{n\rightarrow\infty} W_e(S(\gamma_n)) \geq W_e(S(\gamma)).
 \end{equation*}
\end{theorem}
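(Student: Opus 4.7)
My plan is to recast $W_e(S(\gamma))$ as an integral against the measure-function pair $(d_s^2\gamma, \mu_\gamma)$ with an integrand that is convex in the second-derivative slot, and then invoke the abstract lower semicontinuity theorem for such pairs from \cite{Hutchinson}.

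First I will derive an explicit pointwise formula for $H$ on the open set $U_\gamma := \{\gamma^2 > 0\}$, where Lemma \ref{4_2} provides the required $W^{2,2}$-regularity. Using $|d_s\gamma| \equiv L := \ell(\gamma)$ and standard surface-of-revolution formulas, the scalar mean curvature satisfies
$$H = \frac{d_s\gamma^1\, d_s^2\gamma^2 - d_s\gamma^2\, d_s^2\gamma^1}{L^3} - \frac{d_s\gamma^1}{L\, \gamma^2},$$
which is affine in $d_s^2\gamma$ with coefficients continuous in $(d_s\gamma, \gamma)$. Combined with $d\mu_f = 2\pi\gamma^2 L\, ds$, this rewrites the Willmore energy as
$$W_e(S(\gamma)) = \int_{U_\gamma} F(d_s^2\gamma, d_s\gamma, \gamma)\, d\mu_\gamma,$$
where $F(p, v, q)$ is a nonnegative Borel integrand, quadratic (hence convex) in $p$ and continuous in $(v, q)$ on $\R^2 \times (\R \times (0, \infty))$.

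Second I will pass to the limit via truncation. From \eqref{eq:4_2} and \eqref{eq:4_3} together with $|d_s\gamma_n| \equiv L_n$ constant, I obtain $L_n \to L$, and after subsequence extraction $\gamma_n \to \gamma$ uniformly and $d_s\gamma_n \to d_s\gamma$ $\mathcal{L}^1$-a.e. For $\varepsilon > 0$, define $V_\varepsilon := \{\gamma^2 > \varepsilon\}$. On $V_\varepsilon$ the coefficients of $F(\cdot, d_s\gamma_n, \gamma_n)$ are uniformly bounded and converge uniformly on compact subsets, while \eqref{eq:4_4}--\eqref{eq:4_6} express precisely that $(d_s^2\gamma_n, \mu_{\gamma_n}) \to (d_s^2\gamma, \mu_\gamma)$ as measure-function pairs in the sense of \cite{Hutchinson}. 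Hutchinson's lower semicontinuity theorem for convex integrands then yields
$$\liminf_{n \to \infty} \int_{V_\varepsilon} F(d_s^2\gamma_n, d_s\gamma_n, \gamma_n)\, d\mu_{\gamma_n} \geq \int_{V_\varepsilon} F(d_s^2\gamma, d_s\gamma, \gamma)\, d\mu_\gamma.$$
Since $F \geq 0$, the left side is bounded above by $\liminf_n W_e(S(\gamma_n))$; monotone convergence as $\varepsilon \to 0$ then recovers $W_e(S(\gamma))$ on the right.

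The hardest part is handling the singular set $\{\gamma^2 = 0\}$, where the $1/\gamma^2$ coefficient in $F$ blows up and Hutchinson's abstract theorem cannot be applied directly. The truncation argument above sidesteps this in the limit, but verifying that no mass is lost from the measures $\mu_{\gamma_n}$ concentrating near $\{\gamma^2 = 0\}$ as $\varepsilon \to 0$ will require the boundary behaviour of Lemma \ref{4_3} together with the uniform bound \eqref{eq:4_5}. A secondary technical point is that the dependence of $F$ on the strongly converging lower-order data $(d_s\gamma_n, \gamma_n)$ is not immediately admitted by the standard statement of the measure-function pair lower semicontinuity, so an Egorov-type approximation on $V_\varepsilon$ will be needed to freeze these coefficients at their limiting values, up to an error vanishing with $n$, before invoking the theorem.
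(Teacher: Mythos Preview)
The paper does not supply its own proof of this statement: Theorem~\ref{4_5} is quoted verbatim as Proposition~1 of \cite{ChoksiVeroni} and used as a black box, so there is nothing in the present paper to compare your argument against.

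That said, your outline is essentially the strategy of the cited reference. Choksi--Veroni also write the energy as an integral of a nonnegative integrand that is convex (in fact quadratic) in $d_s^2\gamma$ with coefficients depending continuously on $(\gamma,d_s\gamma)$, and then invoke Hutchinson's lower semicontinuity for measure--function pairs together with the strong convergence \eqref{eq:4_3} of the lower-order data. Your identification of the two technical subtleties --- the blow-up of the coefficients near $\{\gamma^2=0\}$ handled by truncation plus monotone convergence, and the need to freeze the strongly converging coefficients before applying the abstract theorem --- matches the issues that arise in the actual proof. One minor remark: your worry that ``no mass is lost from the measures $\mu_{\gamma_n}$ concentrating near $\{\gamma^2=0\}$'' is not really the obstacle, since $F\geq 0$ means you only need a one-sided inequality and can simply discard the contribution of $\{\gamma_n^2\leq\varepsilon\}$ on the left; the monotone convergence step on the right as $\varepsilon\to 0$ is all that is required there.
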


At last we cite the necessary compactness result:
\begin{theorem}[see Def. 1, Eq. 49, Eq. 50 and Prop. 2 in \cite{ChoksiVeroni}]
 \label{4_6}
 Let $\gamma_n:[0,1]\rightarrow\R\times[0,\infty)$ with $\gamma_n\in C^1((0,1),\R^2)\cap W^{2,2}_{loc}((0,1),\R^2)$ with $\gamma^2_n(0)=\gamma^2_n(1)=0$, $|d_s\gamma_n(t)|=\ell(\gamma_n)$ and $\gamma_n^2(t)>0$ for all $t\in(0,1)$. Furthermore let
 \begin{equation}
  \label{eq:4_7}
  \sup_{n\in\N}W_e(S(\gamma_n)) < \infty
 \end{equation}
 and 
 \begin{equation}
  \label{eq:4_8}
  \sup_{n\in\N}\mu_{\gamma_n}(\R) < \infty.
 \end{equation}
 Then either there exists a subsequence (after relabeling and possibly after a translation) $\gamma_n$ which converges to a generalized generator $\gamma$ in the sense of Definition \ref{4_5} or there exists a point $(z,0)\in\R^2$ such that $\gamma_n$ converges strongly in $W^{1,2}((0,1),\R^2)$ to that point.
\end{theorem}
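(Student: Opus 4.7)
The plan is to dichotomise according to whether $\ell_n:=\ell(\gamma_n)$ collapses to zero, first using translations to keep the curves in a bounded region. Since $|d_s\gamma_n|\equiv\ell_n$, each $\gamma_n$ is $\ell_n$-Lipschitz, and \eqref{eq:4_8} combined with \eqref{eq:2_2} gives $\ell_n\int_0^1\gamma_n^2\, ds\le C/(2\pi)$, which bounds $\gamma_n^2$ in $L^1$ whenever $\ell_n$ is bounded below.

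If $\ell_n\to 0$ along a subsequence, then the image of $\gamma_n$ is contained in a disc of radius $\ell_n$ about $\gamma_n(0)\in\R\times\{0\}$. After translating $\gamma_n(0)$ to a fixed point $(z,0)$, the curves converge uniformly to $(z,0)$ and $d_s\gamma_n\to 0$ in $L^\infty$, which gives the strong $W^{1,2}$-convergence of the second alternative. Otherwise $\ell_n\ge c>0$ on a subsequence; the $L^1$-bound on $\gamma_n^2$ together with the Lipschitz bound and $\gamma_n^2(0)=\gamma_n^2(1)=0$ confines $\gamma_n$, after an $x$-translation, to a fixed compact subset of $\R\times[0,\infty)$. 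By Arzel\`a--Ascoli we extract a uniform limit $\gamma$; after a further subsequence $\ell_n\to\ell$, and weak $L^2$-convergence of $d_s\gamma_n$ combined with the norm identity $\|d_s\gamma_n\|_{L^2}^2=\ell_n^2\to\ell^2=\|d_s\gamma\|_{L^2}^2$ (the limit inheriting constant speed from the uniform Lipschitz limit) upgrades to strong $L^2$-convergence, giving \eqref{eq:4_2}--\eqref{eq:4_3}. The weak measure convergence \eqref{eq:4_4} then follows from dominated convergence applied to $\mu_{\gamma_n}=2\pi\gamma_n^2|d_s\gamma_n|\mathcal{L}^1\lfloor[0,1]$.

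The main obstacle is establishing \eqref{eq:4_5} and the measure-function-pair convergence \eqref{eq:4_6}. The strategy is to exploit the explicit formulas for the principal curvatures $k_1^\gamma,k_2^\gamma$ of $S(\gamma)$ recorded in \cite{ChristianBaer}: on any subinterval $[a,b]\subset\{\gamma^2>0\}$ they express $|d_s^2\gamma|^2$ as $((k_1^\gamma)^2+(k_2^\gamma)^2)$ modulated by positive powers of $\gamma^2$ and $|d_s\gamma|$, so that \eqref{eq:4_7} together with $\ell_n\ge c$ yields a genuine local $W^{2,2}$-bound for $\gamma_n$ on $[a,b]$. Classical compactness then gives weak $W^{2,2}_{loc}(\{\gamma^2>0\})$-convergence of a subsequence, and the pair convergence \eqref{eq:4_6} on test functions supported in $\{\gamma^2>0\}$ follows. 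The delicate point is behaviour near $\{\gamma^2=0\}$, where classical Sobolev compactness breaks down; however, since $\mu_{\gamma_n}$ has density $\gamma_n^2|d_s\gamma_n|$ vanishing where $\gamma_n^2\to 0$, the contribution to \eqref{eq:4_6} from a small neighbourhood of the singular set is controlled by Cauchy--Schwarz and the uniform bound \eqref{eq:4_5}, so no mass escapes. Finally, \eqref{eq:4_1} for $\gamma$ follows by Fatou on the complement of $\{\gamma^2=0\}$. This is precisely where the generalised-generator framework of Choksi--Veroni is essential instead of classical $W^{2,2}$-compactness.
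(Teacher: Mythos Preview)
The paper does not give its own proof of this statement; it is quoted as Proposition~2 of Choksi--Veroni \cite{ChoksiVeroni}, and the surrounding text only adds the remark that \eqref{eq:4_8} is an area bound. There is therefore nothing in the paper to compare your sketch against; what follows is an assessment of the sketch on its own terms.

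Your dichotomy on $\ell_n$ and your identification of the measure-function-pair convergence as the crux are both right. The non-degenerate branch, however, contains a genuine gap. You assert that the uniform limit $\gamma$ ``inherits constant speed from the uniform Lipschitz limit'' and then use $\|d_s\gamma\|_{L^2}^2=\ell^2$ to promote weak to strong $L^2$-convergence of $d_s\gamma_n$. But uniform convergence of $\ell_n$-Lipschitz curves yields only $|d_s\gamma|\le\ell$ almost everywhere; a sequence of constant-speed zigzags converging uniformly to a straight segment shows that the speed can strictly drop in the limit. What actually pins down $|d_s\gamma|\equiv\ell$ in this setting is precisely the local $W^{2,2}$-bound on $\{\gamma^2>\delta\}$ furnished by the Willmore energy --- the step you defer to your ``main obstacle'' paragraph. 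That bound gives $C^1_{\mathrm{loc}}$-subconvergence on $\{\gamma^2>0\}$, hence $|d_s\gamma_n|\to|d_s\gamma|$ pointwise there, and only then do the constant-speed property of $\gamma$ and the strong convergence \eqref{eq:4_3} follow. In short, the $W^{2,2}_{\mathrm{loc}}$ estimate has to precede \eqref{eq:4_3}, not come after it. A smaller point you also elide is why $\ell_n$ is bounded above in the non-degenerate branch: the $L^1$-bound on $\gamma_n^2$ together with the boundary conditions does bound $\sup\gamma_n^2$, but confining $\gamma_n^1$ to a fixed interval after translation still needs $\ell_n\le C$, which requires the Willmore bound and not just the area bound.
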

Please note, that \eqref{eq:4_8} is a uniform bound on the area of the surfaces $S(\gamma_n)$.

\subsection{Existence with one sided singular boundary}
\label{sec:4_2}
Here we examine the following boundary value problem:
Given $p\in\Halb$, $\beta\in[0,2\pi)$ and $x\in\R\cup\{\infty\}$. Then we define
\begin{equation}
 \label{eq:4_9}
 M_{p,\beta,x}:=\left\{\begin{array}{c} 
 \gamma\in W^{2,2}_{loc}((0,1),\Halb)|\ \gamma\in W^{2,2}(0,1-\delta),\Halb)\mbox{ for all }\delta>0\\
 \gamma(0)=p,\ \frac{d_s\gamma}{|d_s\gamma|}(0) = (\cos\beta,\sin\beta),\\
 \lim_{s\rightarrow 1}\gamma(t)=(x,0),\mbox{ if }x\in\R,\\
 \lim_{s\rightarrow 1}|\gamma(t)|=\infty,\mbox{ if }x=\infty,\\
 d_s\gamma\neq0\end{array}\right\}
\end{equation}

Now we show existence of a minimiser  w.r.t. the hyperbolic elastic energy $W_h$. 
The strategy is the same as in \cite[Prop. 3.7]{EichmannSchaetzleWillmoreExist23}.
\begin{theorem}[cf.  \cite{EichmannSchaetzleWillmoreExist23} Prop. 3.7]
 \label{4_7}
 There exists a $\gamma_{min}\in M_{p,\beta,x}$, such that
 $$W_h(\gamma_{min}) = \inf\{W_h(\gamma)|\ \gamma\in M_{p,\beta,x}\} < \infty.$$
 Furthermore $\gamma_{min}$ is a critical point of $W_h$ and either part of a Moebius transformed catenoid or part of a hyperbolic geodesic. 
\end{theorem}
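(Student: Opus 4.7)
I would prove this by the direct method of the calculus of variations, using Theorem \ref{4_6} as the main compactness tool. First I would verify that $M_{p,\beta,x}$ is non-empty: Lemma \ref{3_2} combined with the Möbius transformation of Lemma \ref{3_1} (mapping the normalized configuration $(0,\alpha),(\alpha,0)$ to $(p,(\cos\beta,\sin\beta))$) produces explicit elements of $M_{p,\beta,x}$ with finite hyperbolic elastic energy. Thus $I := \inf\{W_h(\gamma)\mid\gamma\in M_{p,\beta,x}\} < \infty$, and I fix a minimizing sequence $\gamma_n$ with $W_h(\gamma_n)\to I$, reparametrized so that $|d_s\gamma_n|\equiv\ell(\gamma_n)$.

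To apply Theorem \ref{4_6}, I would close each $\gamma_n$ at $s=0$ by attaching the sphere cap $\gamma_{circ}(p^1,p^2,\beta)$ from \eqref{eq:2_9}/\eqref{eq:2_10}, producing a curve $\tilde\gamma_n$ from the $x$-axis to the $x$-axis. The attached sphere cap has fixed finite elastic energy depending only on $(p,\beta)$, and the Bryant--Griffiths identity \eqref{eq:2_7} converts the uniform bound $W_h(\gamma_n)\leq I+1$ into a uniform bound on $W_e(S(\tilde\gamma_n))$; Lemma \ref{4_3} ensures that the boundary term in \eqref{eq:2_7} at the singular end contributes a fixed limit $\pm 1$. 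A uniform area bound \eqref{eq:4_8} on $\mu_{\tilde\gamma_n}(\R)$ follows by the argument in the proof of \cite[Prop.~3.7]{EichmannSchaetzleWillmoreExist23}, via Simon's diameter estimate applied to a suitable completion of $S(\tilde\gamma_n)$. Theorem \ref{4_6} then yields, up to a subsequence, weak convergence as generators to a generalized generator $\tilde\gamma$. The collapse-to-a-point alternative is impossible because $\tilde\gamma_n$ passes through the fixed point $p\in\Halb$ off the $x$-axis, which survives the $C^0$ convergence \eqref{eq:4_2}.

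Peeling the attached sphere cap off $\tilde\gamma$ yields a candidate $\gamma_{min}$. The uniform convergence \eqref{eq:4_2} together with \eqref{eq:4_3} preserves both the starting point $p$ and the initial direction of the tangent (via Lemma \ref{4_2}), and the endpoint $(x,0)$; the lower semicontinuity of $W_e$ from Theorem \ref{4_5} combined once more with \eqref{eq:2_7} yields $W_h(\gamma_{min})\leq I$. Inside $\{\gamma_{min}^2>0\}$ the standard first variation argument shows that $\gamma_{min}$ is a smooth critical point of $W_h$ with finite energy and with at least one asymptotic end on the $x$-axis. The Langer--Singer classification \cite[table 2.7 c)]{LangerSinger1} then forces $\gamma_{min}$ to be either a Möbius-transformed catenoid (with curvature of the form \eqref{eq:3_0_1}) or a hyperbolic geodesic (half-circle or vertical line).

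The step I expect to be the main obstacle is ensuring that $\gamma_{min}$ actually lies in $M_{p,\beta,x}$, i.e.\ that $\gamma_{min}^2>0$ on $(0,1)$. A priori the limit generalized generator may touch the $x$-axis at isolated interior points, in which case Lemma \ref{4_3} forces vertical tangents there and $\gamma_{min}$ decomposes into a finite concatenation of elasticas. I would rule this out by a surgery/comparison argument: splitting at any interior touch produces two subarcs, each by the classification a Möbius-transformed catenoid or half circle with an asymptotic end on the $x$-axis; summing the explicit energies \eqref{eq:3_19}--\eqref{eq:3_20} and invoking the uniqueness in Lemma \ref{3_2} one compares with the single catenoid or geodesic joining $(p,\beta)$ directly to $(x,0)$ (which lies in $M_{p,\beta,x}$) and obtains strictly smaller energy, contradicting minimality. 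The case $x=\infty$ I would handle uniformly by applying a hyperbolic isometry from Lemma \ref{3_1} that sends $\infty$ to a finite point of the $x$-axis, reducing to the case $x\in\R$ already treated.
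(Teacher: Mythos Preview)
Your overall strategy coincides with the paper's: attach the sphere cap $\gamma_{circ}(p,\beta)$, apply the compactness Theorem~\ref{4_6} to the closed generators $\tilde\gamma_n$, pass to the limit via lower semicontinuity (Theorem~\ref{4_5}), and finish with the Langer--Singer classification. However, the step you single out as the ``main obstacle'' is handled incorrectly, and your area bound is too vague.

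\textbf{Ruling out interior touches.} Your surgery argument is circular: you invoke the classification on the subarcs of $\tilde\gamma_{min}$, but classification requires the subarcs to be smooth critical points of $W_h$. At this stage $\tilde\gamma_{min}$ is only a generalized generator; you have neither shown it lies in $M_{p,\beta,x}$ nor that it (or its pieces) is critical, so you cannot yet appeal to \cite{LangerSinger1}. Worse, $W_h$ of a subarc reaching the axis need not even be finite, so the energy comparison you sketch is ill-posed. The paper's argument avoids all this. Since any \emph{proper part} of an asymptotic geodesic furnished by Lemmata~\ref{3_1}--\ref{3_2} lies in $M_{p,\beta,x}$ and has $W_h<8$, one has $I<8$; combined with the identity $W_e(S(\tilde\gamma_\ell))=\tfrac{\pi}{2}W_h(\gamma_\ell)+4\pi$ (here $W_h(\gamma_{circ})=0$, not merely ``fixed finite'', because the cap is a hyperbolic geodesic) this gives the a~priori bound $W_e(S(\tilde\gamma_\ell))<8\pi$. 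Lower semicontinuity then yields $W_e(S(\tilde\gamma_{min}))<8\pi$, whereas any interior touch forces, by Lemma~\ref{A_1} applied to each of the (at least two) resulting closed pieces, $W_e(S(\tilde\gamma_{min}))\geq 4\pi+4\pi=8\pi$. This contradiction needs no structural information about the limit.

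\textbf{Area and diameter.} Simon's inequality bounds area in terms of Willmore energy \emph{and} diameter, so you must first control $\operatorname{diam}S(\tilde\gamma_\ell)$; appealing to ``a suitable completion'' does not do this. The paper argues by dichotomy: if the diameter stays bounded one applies \cite[Lemma~1]{Simon} directly; if not, one inverts at a point $(x_{inv},0)$ on the axis (avoiding both endpoints of $\tilde\gamma_\ell$), obtains a sequence with bounded diameter and new fixed boundary data, runs the entire argument above for that sequence, and uses the same $8\pi$ bound to conclude the inverted limit does not touch $(x_{inv},0)$. Inverting back then shows the original sequence had bounded diameter after all, a contradiction.
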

\begin{proof}
 Let $\gamma_\ell\in M_{p,\beta,x}$ be a minimising sequence, i.e.
 $$\lim_{\ell\rightarrow\infty}W_{h}(\gamma_\ell) = \inf\{W_h(\gamma)|\ \gamma\in M_{p,\beta,x}\} < \infty.$$
 The lemmata \ref{3_1} and \ref{3_2} yield this infimum to be finite. 
 As in \eqref{eq:1_7} we add the sphere cap (see \eqref{eq:2_9}/\eqref{eq:2_10})
 $$\scap:= S(\gamma_{circ}(p,\beta))$$
 to $\gamma_\ell$ and call the resulting curves
 $$\tilde{\gamma}_\ell := \gamma_{circ}(p,\beta) \oplus \gamma_\ell,$$
 see Figure \ref{fig:4_1}.
 \begin{figure}[h]
 \centering 
\includegraphics{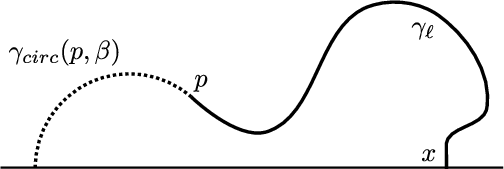}  
\caption{Adding a circle to close $\gamma_\ell$.}
\label{fig:4_1}
\end{figure}
Please note, that $\gamma_{circ}(p,\beta)$ does only depend on the boundary data $p$ and $\beta$ is therefore independent of $\ell$. 
 By an isometry of $\Halb$, i.e. an orientation preserving Moebius transformation, we can assume $\gamma_{circ}(p,\beta)$ to be of the form \eqref{eq:2_9}, i.e. part of a circle with center on the $x$-axis. 
 By the same procedure and the invariance of $W_h$ w.r.t. to Moebius transformations, we can assume w.l.o.g., that $x\in\R$.
 Again w.l.o.g. we reparametrize  $\tilde{\gamma}_\ell:[0,1]\rightarrow\overline{\Halb}$ and in such a way, that $\tilde{\gamma}_\ell$ is a generalized generator, see Definition \ref{4_1}. 
 This is possible because we will now show, that the corresponding Willmore energy $W(S(\tilde{\gamma}_\ell))$ is finite, see \eqref{eq:4_10}. Since the topology is fixated, e.g. \cite[Eq. (1.1)]{Schaetzle} then yields \eqref{eq:4_1} and $\tilde{\gamma}_\ell$ is a generalized generator.
 By $\tilde{\gamma}_\ell\in C^{1,1}$,   \eqref{eq:2_7} is applicable, which  yields for every $\varepsilon>0$
 $$W_e(S(\tilde{\gamma}_\ell|_{[\varepsilon, 1-\varepsilon]})) = \frac{\pi}{2}\left(W_h(\gamma_{circ}|_{[\varepsilon,1]}(p,\beta)) + W_{h}(\gamma_\ell|_{[0,1-\varepsilon]}) - 4\left[\frac{d_s\tilde{\gamma}^{2}_\ell}{|d_s\tilde{\gamma}_\ell|}\right]_{\varepsilon}^{1-\varepsilon}\right)$$
 after a suitable reparametrization of $\gamma_{circ}$ and $\gamma_\ell$.
 Since the $W_h$ terms are finite, the Willmore energy is finite as well and we can regard $\tilde{\gamma}_\ell$ as generalized generators. 
 Now Lemma \ref{4_3} is also applicable and we obtain
 \begin{align}\label{eq:4_9_1}
 \begin{split}
   W_e(S(\tilde{\gamma}_\ell)) =& \frac{\pi}{2}\left(W_h(\gamma_{circ}(p,\beta)) + W_{h}(\gamma_\ell) - 4\left[\frac{d_s\tilde{\gamma}^{2}_\ell}{|d_s\tilde{\gamma}_\ell|}\right]_{0}^1\right)\\
  =& \frac{\pi}{2} W_h(\gamma_\ell) + 4\pi.
 \end{split}
 \end{align}
 $W_h(\gamma_{circ}(p,\beta))=0$, because $\gamma_{circ}(p,\beta)$ is a geodesic w.r.t. the metric $g_h$, see \eqref{eq:2_4}. 
 Since the elastic energy of a full asymptotic geodesic is by \eqref{eq:3_19} rsp. \eqref{eq:3_20} exactly $8$, the Lemmata \ref{3_1} and \ref{3_2} yield after possibly choosing a subsequence
 \begin{equation}
  \label{eq:4_10}
  W_e(S(\tilde{\gamma}_\ell)) < \frac{\pi}{2} 8 + 4\pi = 8\pi.
 \end{equation}
In a first step we assume $\operatorname{diam}(S(\tilde{\gamma}_\ell))\leq C < \infty$, i.e. bounded independent of $\ell$. 
By \cite[Lemma 1]{Simon} and the bound on the Willmore energy \eqref{eq:4_10}, we obtain
$$\mu_{\tilde{\gamma}_\ell}(\R)=\operatorname{area}(S(\tilde{\gamma}_\ell)) \leq C<\infty.$$ 
Since the corresponding Willmore energy and the area are bounded, we can apply Theorem \ref{4_6} after a possible reparametrization.
Hence after choosing a suitable subsequence, we obtain a generalized generator $\tilde{\gamma}_{min}:[0,1]\rightarrow \overline{\Halb}$ (which is not a point on the $x$-axis due to the boundary data), such that
$$\tilde{\gamma}_\ell\rightarrow\tilde{\gamma}_{min}\mbox{ as generators.}$$
Next we show that 
\begin{equation}
 \label{eq:4_11_1}
 \forall\ t\in]0,1[ \mbox{ we have }\tilde{\gamma}_{min}^2(t) > 0.
\end{equation}
We proceed by contradiction and assume there is a $t\in(0,1)$, such that $\tilde{\gamma}_{min}^2(t)=0$. 
Then Lemma \ref{A_1} yields
\begin{equation}
 \label{eq:4_10_1}
 W_e(S(\tilde{\gamma}_{min}))=  W_e(S(\tilde{\gamma}_{min}|_{[0,t]})) + W_e(S(\tilde{\gamma}_{min}|_{[t,1]}))\geq 4\pi+4\pi=8\pi.
\end{equation}
On the other hand \eqref{eq:4_10} and the lower semicontinuity Theorem \ref{4_5} yield
$$W_e(S(\tilde{\gamma}_{min}))\leq \liminf_{\ell\rightarrow \infty}W_e(S(\tilde{\gamma}_\ell)) < 8\pi, $$
which is a contradiction. Hence there is no such $t$.

By the uniform convergence \eqref{eq:4_2}, we still have that $\tilde{\gamma}_{min}$ consists of the boundary circle $\gamma_{circle}(p,\beta)$ and some other part $\gamma_{min}:[0,1]\rightarrow\overline{\Halb}$ i.e. we have after a possible reparametrization
$$\tilde{\gamma}_{min}=\gamma_{circ}(p,\beta)\oplus \gamma_{min}.$$
By the Sobolov embedding and Lemma \ref{4_2} $\gamma_{min}$ satisfies the boundary data, hence
$$\gamma_{min}|_{(0,1)}\in M_{p,\beta,x}.$$
Finally the lower semicontinuity Theorem \ref{4_5} yields
\begin{align*}
 &W_e(S(\gamma_{circ})) + \liminf_{\ell\rightarrow\infty}W_e(S(\gamma_\ell))=\liminf_{\ell\rightarrow\infty}W_e(S(\tilde{\gamma}_\ell))\\
 \geq& W_e(S(\tilde{\gamma}_{min})
 = W_e(S(\gamma_{circ})) + W_e(S(\gamma_{min})).
\end{align*}
Hence by \eqref{eq:2_7}, i.e. \eqref{eq:4_9_1} we obtain
$$W_h(\gamma_{min}) \leq \liminf_{\ell\rightarrow\infty} W_h(\gamma_\ell),$$
hence $\gamma_{min}$ is a minimiser.\\
Let us now consider the case, that for a subsequence
$$\operatorname{diam}(S(\tilde{\gamma}_\ell))\rightarrow\infty.$$
By an inversion at a suitable circle with center $(x_{inv},0)$ on the $x$-axis (avoiding the intersection of $\gamma_{circ}$ with the $x$-axis and $x$ itself), we obtain a sequence of generalized generators obeying new boundary values.
By the conformal invariance of the Willmore energy and \eqref{eq:4_9_1} the new sequence is minimising for the new boundary data as well. Furthermore the new curves have bounded diameter.
Then the same arguments as above apply (especially \eqref{eq:4_10_1}, see also \cite[Figure 5]{EichmannGrunau} for a similar argument) and we obtain a minimiser, which does not intersect with the inversion point $(x_{inv},0)$ by \eqref{eq:4_11_1}.
Therefore going back to the original sequence yields by the uniform convergence \eqref{eq:4_2}, that the diameter is bounded. This is a contradiction.\\
All in all we found a minimiser $\gamma_{min}\in M_{p,\beta,x}$. It is obviously critical w.r.t. $W_h$ and by e.g. \cite[§5]{EichmannGrunau} it is smooth in $[0,1)$.
Moreover it has finite elastic energy $W_h$ and the discussion in \cite[table 2c]{LangerSinger1} yields $\gamma_{min}$ to be part of an asymptotic geodesic curve or a hyperbolic geodesic.
Asymptotic geodesic curves are Moebius transformed catenoids (cf. the discussion in section \ref{sec:3}) and therefore the result follows.
\end{proof}
\begin{remark}
 \label{4_8}
 Lemma \ref{3_2} together with Lemma \ref{3_1} yield the minimiser in Theorem \ref{4_7} to be unique.
\end{remark}

\section{Proof of main Theorem \ref{1_1}}
\label{sec:5}
We start with Theorem \ref{1_1}. Our argument relies on \ref{A_3}, i.e. the paper \cite{SchlierfWillmoreFlow}. Furthermore our argument is strongly inspired by \cite[Prop. 3.7 and Prop. 3.9]{EichmannSchaetzleWillmoreExist23}.
\begin{proof}
 Let $\gamma:[0,T)\times[0,1]$ be the family of profile curves corresponding to a short time solution of the rotational symmetric Willmore flow. Here $T\in[0,\infty]$ is the maximal time of existence.
 Existence and uniqueness is discussed in \cite[Appendix C]{SchlierfWillmoreFlow}, see also the references therein.
 We proceed by contradiction and assume the hyperbolic length diverges, i.e. that there exists a sequence $t_\ell\uparrow T$ with
 \begin{equation}
  \label{eq:5_1}
  \lim_{\ell\rightarrow\infty}\mathcal{L}_\Halb(\gamma(t_\ell,\cdot)) = \infty.
 \end{equation}
 As in the proof of Theorem \ref{4_7} or as in \eqref{eq:1_7} (see also Figure \ref{fig:1_1}) we complement the boundary conditions by adding sphere caps $\scap_- = S(\gamma_{circ}(x_-,\alpha_-,\beta_-))$ and $\scap_+ = S(\gamma_{circ}(x_+,\alpha_+,\beta_+)),$ to $\gamma(t_\ell,\cdot)$, which are independent of $\ell$. 
 We call the resulting curves $\tilde{\gamma}(t_\ell,\cdot)$.
 Since $W^e_{closed}$ is invariant under inversions, we assume the sphere caps to be bounded and spherical. We can that, because in the end we will just compare energies.\\
 Hence we are able to choose $x\in \R$, such that $c^x$ (see \eqref{eq:1_11}) consists of at least one part of a circle. Then \eqref{eq:3_19}/\eqref{eq:3_20} together with the Lemmata \ref{3_1} and \ref{3_2} yield
 $$W_h(c^x) < 8.$$
 Combining \eqref{eq:2_7} with Lemma \ref{4_3} gives us for this $x\in\R$ (cf. \eqref{eq:4_9_1})
 \begin{align}
  \label{eq:5_2}
  \begin{split}
    W^e_{closed}(S({c^x})) =& \frac{\pi}{2} (W_h(\gamma_{circ}(x_-,\alpha_-,\beta_-)) + W_h(\gamma_{circ}(x_+,\alpha_+,\beta_+))\\
    &+ W_h(c^x) + 8+8) < 12\pi.
  \end{split}
 \end{align}
 The assumptions on the initial data in Theorem \ref{1_1} therefore yields
 \begin{equation}
 \label{eq:5_2_1}
  W^e_{closed}(S({\gamma_0})) < 12\pi.
 \end{equation}
First we assume the diameter to be bounded, i.e.
$$\sup_{\ell}\operatorname{diam}S({\tilde{\gamma}(t_\ell,\cdot)}) < \infty.$$
Again using \cite[Lemma 1]{Simon} and the bound on the Willmore energy \eqref{eq:5_2_1}, we obtain
$$\mu_{\tilde{\gamma}(t_\ell,\cdot)}(\R)=\operatorname{area}(S(\tilde{\gamma}(t_\ell,\cdot))) \leq C<\infty.$$ 
As in the proof of Theorem \ref{4_7} the $\tilde{\gamma}(t_\ell,\cdot)$ are generalized generators.
Hence we can apply Theorem \ref{4_6} and choose a subsequence with
$$\tilde{\gamma}(t_\ell,\cdot)\rightarrow \tilde{\gamma}_{sing}\mbox{ as generators for }\ell\rightarrow\infty$$
and $\tilde{\gamma}:[0,1]\rightarrow\overline{\Halb}$ is a generalized generator. 
Since the sphere caps $\scap_\pm$ remain unchanged in the limit by the uniform convergence, \eqref{eq:4_2}, the inner regularity Lemma \ref{4_2} together with Sobolev embedding shows $\tilde{\gamma}_{sing}\in C^1_{loc}((0,1)\cap \tilde{\gamma}_{sing}^{-1}(\Halb))$.
Hence the restriction of $\tilde{\gamma}_{sing}$ to the part without the sphere caps, which we call after reparametrization $\gamma_{sing}:[0,1]\rightarrow\overline{\Halb}$, still satisfies the boundary values \eqref{eq:1_5}.
Then by \eqref{eq:5_2} and Lemma \ref{A_1} we find at most one $s_0\in (0,1)$ with
$$\gamma^2_{sing}(s_0) = 0.$$
Since the hyperbolic length of $\gamma(t_\ell,\cdot)$ diverges to $\infty$ and since the diameter is bounded, by uniform convergence, this $s_0$ has to exist.
Hence $\gamma_{sing}|_{[0,s_0]}$ and $\gamma_{sing}|_{[s_0,1]}$ respectively satisfy a boundary value problem as in Theorem \ref{4_7} with   $(\gamma_{sing}^1(s_0),0)$ as singular boundary value. Therefore Theorem \ref{4_7} yields 
$$ W^e_{closed}(S(\gamma_{sing})) \geq \inf\{W^e_{closed}(S({c^x}))|\ x\in \R\}.$$
The lower semi-continuity Theorem \ref{4_5}, the energy assumption of Theorem \ref{1_1} and the strict monotonicity of $t\mapsto W^e_{closed}(S({\gamma(t,\cdot)}))$ outside of critical points (see e.g. \cite[Eq. (2.3)]{SchlierfWillmoreFlow}) yields
\begin{align*}
 &W^e_{closed}(S(\gamma_{sing}))\leq \liminf_{\ell\rightarrow\infty}W^e_{closed}(S({\gamma(t_\ell,\cdot)})) < W^e_{closed}(S({\gamma(0,\cdot)}))\\
 \leq& \inf\{W^e_{closed}(S({c^x}))|\ x\in \R\} \leq W^e_{closed}(S(\gamma_{sing})), 
\end{align*}
which is a contradiction.\\
Let us now turn to the other case, i.e.
$$\operatorname{diam}(\tilde{\gamma}(t_\ell,\cdot))\rightarrow\infty\mbox{ for }\ell\rightarrow\infty.$$
By an inversion at an appropriate fixated point $(x_{inv},0)$ on the $x$-axis (cf. proof of Theorem \ref{4_7}), we obtain a sequence of curves with bounded diameter, different but fixated boundary values and the same Willmore energy. 
Hence we can apply the arguments of the first case and arrive at a contradiction there as well.
This is possible because $W^e_{closed}$ is invariant w.r.t. to Moebius transformations of the upper half plane and therefore the energy assumptions of Theorem \ref{1_1} carry over.\\
All in all we obtain a contradiction and the hyperbolic length stays bounded. Then Theorem \ref{A_3} yields the desired result.
\end{proof}

\appendix
\section{Helpful results}
\label{sec:A}
Here we gather some useful results from other publications for the readers convenience.\\
The first lemma is concerned with the needed Willmore energy for a singularity to occur.
A proof follows by combining Lemma \ref{4_3} and \eqref{eq:2_7}, but since this has already been done in \cite[Lemma 3.8]{EichmannSchaetzleWillmoreExist23} we skip the details here.
\begin{lemma}[see \cite{EichmannSchaetzleWillmoreExist23} Lemma 3.8]
 \label{A_1}
 Let $\gamma$ be a generalized generator as in Definition \ref{4_1}.
 Let $a,b\in[0,1]$ be such that $\gamma_2(a)=\gamma_2(b)=0$, $\gamma_2(t)>0$ for all $t\in(a,b)$. Then the Willmore energy of $S(\gamma|_{[a,b]})$ satisfies
 $$W_e(S(\gamma|_{[a,b]}))\geq 4\pi.$$
\end{lemma}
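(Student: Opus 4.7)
The plan is to derive the bound directly from the Bryant--Griffiths identity \eqref{eq:2_7} applied to $\gamma|_{[a,b]}$, with the boundary term pinned down via Lemma \ref{4_3}. Since $\gamma|_{[a+\varepsilon,b-\varepsilon]}\in W^{2,2}$ by Lemma \ref{4_2}, formula \eqref{eq:2_7} is applicable there:
$$\frac{2}{\pi} W_e(S(\gamma|_{[a+\varepsilon,b-\varepsilon]})) = W_h(\gamma|_{[a+\varepsilon,b-\varepsilon]}) - 4\left[\frac{d_s\gamma^2}{\sqrt{(d_s\gamma^1)^2+(d_s\gamma^2)^2}}\right]_{a+\varepsilon}^{b-\varepsilon}.$$
I would then pass to the limit $\varepsilon\downarrow 0$: the two integral quantities converge by monotone convergence of non-negative integrands (finiteness is part of the generalized generator definition), while the boundary term converges because by Lemma \ref{4_3} the one-sided limits of $d_s\gamma$ at $a^+$ and $b^-$ exist.

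Next I would identify the boundary values. Lemma \ref{4_3} gives $d_s\gamma^1(a^+)=d_s\gamma^1(b^-)=0$, so $|d_s\gamma|$ equals $|d_s\gamma^2|$ at the endpoints, and $d_s\gamma^2$ takes the values $\pm\ell(\gamma)$ with opposite signs at $a$ and $b$. Which of the two cases of Lemma \ref{4_3} occurs is forced by the geometric hypothesis $\gamma^2(a)=\gamma^2(b)=0$ with $\gamma^2>0$ on $(a,b)$: this requires $d_s\gamma^2(a^+)\geq 0$ and $d_s\gamma^2(b^-)\leq 0$, so necessarily
$$d_s\gamma^2(a^+)=+\ell(\gamma),\qquad d_s\gamma^2(b^-)=-\ell(\gamma),$$
which gives $\tfrac{d_s\gamma^2}{|d_s\gamma|}(a^+)=+1$ and $\tfrac{d_s\gamma^2}{|d_s\gamma|}(b^-)=-1$.

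Substituting these into the limiting identity produces
$$\frac{2}{\pi} W_e(S(\gamma|_{[a,b]})) = W_h(\gamma|_{[a,b]}) - 4\bigl((-1)-(+1)\bigr) = W_h(\gamma|_{[a,b]})+8\geq 8,$$
since $W_h\geq 0$, whence $W_e(S(\gamma|_{[a,b]}))\geq 4\pi$. The only subtle point is verifying that the boundary data of Lemma \ref{4_3} force the ``right'' sign choice here (rather than the $-8$ alternative), but this is immediate from the elementary observation that $\gamma^2$ cannot become negative on the open interval where it is assumed positive. All the remaining steps are routine limit arguments for non-negative integrands, so I do not expect any real analytic obstacle.
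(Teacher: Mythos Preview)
Your proposal is correct and follows exactly the approach the paper indicates: the paper explicitly states that the proof ``follows by combining Lemma \ref{4_3} and \eqref{eq:2_7}'' and refers to \cite[Lemma 3.8]{EichmannSchaetzleWillmoreExist23} for details. Your argument supplies precisely those details, including the sign determination via the positivity of $\gamma^2$ on $(a,b)$, so there is nothing to add.
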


Next we cite the main result of \cite{SchlierfWillmoreFlow} to compare with our result.
\begin{theorem}[see \cite{SchlierfWillmoreFlow} Theorem 1.1]
 \label{A_2}
 Let $c_0:[0,1]\rightarrow\Halb$ be an immersed profile curve and $f_{c_0}$  the associated surface of revolution on $\Sigma:=[0,1]\times\mathbb{S}^2$, see \eqref{eq:1_3}. Furthermore we assume for the Willmore energy
 $$W_e(S({c_0})) \leq 4\pi - 2\pi\cdot \frac{d_sc^2_0}{|d_sc_0|}\bigg|_0^1.$$
 Then there is a global solution $f:[0,\infty)\times\Sigma$ of the Willmore flow under Dirichlet data , i.e. $f$ satisfies
 \begin{equation*}
  \left\{\begin{array}{ll}\partial_t f = -(\Delta_{g_f} H  +2H(H^2-K)) N& \mbox{ in }[0,\infty)\times\Sigma\\
  f(0,\cdot) = f_{c_0}(\cdot) &\mbox{ in }\Sigma\\
  f(t,p) = f_{c_0}(p)& \mbox{ for }t\geq 0\mbox{ and }p\in\partial\Sigma\\
  \nu_{f(t,p)} = \nu_{f_{c_0}(p)} &\mbox{ for }t\geq 0\mbox{ and }p\in\partial\Sigma.
\end{array}\right.
 \end{equation*}
 Moreover $f(t,\cdot)$ converges up to reparametrization smoothly to a Willmore surface $f_\infty$ for $t\rightarrow\infty$. Lastly $f_\infty$ and $f_t$ for all $t\geq0$ are embedded.
\end{theorem}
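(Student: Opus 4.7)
The plan is to combine standard short-time parabolic existence with a bounded-hyperbolic-length criterion for long-time existence and convergence. By rotational symmetry, the $L^2$-flow of surfaces reduces to a fourth-order quasilinear parabolic equation for the profile curve $c(t,\cdot):[0,1]\to\Halb$, which by standard parabolic theory admits a unique smooth short-time solution on some maximal interval $[0,T)$ with $T\in(0,\infty]$. As the first main ingredient I would establish the Dirichlet analogue of the Dall'Acqua--M\"uller--Sch\"atzle--Spener principle \cite{DallAcquaMuellerSchaetzleSpener}: if $\mathcal{L}_\Halb(c(t,\cdot))$ stays bounded on $[0,T)$, then $T=\infty$ and the flow converges smoothly up to reparametrization to a Willmore immersion $f_\infty$. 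Embeddedness of $f_t$ and $f_\infty$ then follows from the Li--Yau inequality applied to $W^e_{closed}\leq 8\pi$ at each time. The proof thus reduces to establishing a uniform hyperbolic length bound.

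By \eqref{eq:2_7}, the hypothesis $W_e(S(c_0)) \leq 4\pi - 2\pi\,[d_sc_0^2/|d_sc_0|]_0^1$ is equivalent to $W_h(c_0) \leq 8$. Attaching the sphere caps $\scap_\pm$ determined by the Dirichlet data and recalling \eqref{eq:1_8}, this upgrades to $W^e_{closed}(S(c_0))\leq 8\pi$. Since the Willmore flow is the $L^2$-gradient flow of $W_e$, the map $t\mapsto W_e(S(c(t,\cdot)))$ is non-increasing; because the sphere caps and the density contributions at infinity in \eqref{eq:1_8} are fixed by the Dirichlet data and remain constant along the flow, $t\mapsto W^e_{closed}(S(c(t,\cdot)))$ is non-increasing as well and stays $\leq 8\pi$ on $[0,T)$.

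To obtain the length bound I would argue by contradiction, essentially as in the proof scheme of Section \ref{sec:5}. Assume there is $t_\ell \uparrow T$ with $\mathcal{L}_\Halb(c(t_\ell,\cdot)) \to \infty$. Using Moebius invariance of $W^e_{closed}$ to reduce to the bounded-diameter case (in the unbounded case, invert at a suitable point on the $x$-axis avoiding the boundary), Simon's area--energy estimate \cite[Lemma 1]{Simon} bounds the area, and Theorem \ref{4_6} yields a weak generator limit $\tilde{c}_\infty$. Since the Euclidean diameter stays bounded but the hyperbolic length diverges, $\tilde{c}_\infty$ must touch the $x$-axis at an interior point, otherwise Lemma \ref{4_2} plus Sobolev embedding would give a uniform positive lower bound on $\tilde{c}_\infty^2$, forcing the hyperbolic length to remain finite. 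Applying Lemma \ref{A_1} on each side of the touchdown and accounting for the fixed sphere caps, I would obtain $W^e_{closed}(S(\tilde{c}_\infty)) \geq 8\pi$, and lower semicontinuity (Theorem \ref{4_5}) transfers this bound to the pre-limit.

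The main obstacle is that the hypothesis is non-strict, so the argument above only forces equality, not strict inequality. I would resolve this via the strict monotonicity of $W_e$ along the flow away from Willmore critical points: if $W^e_{closed}(S(c(t,\cdot))) \equiv 8\pi$ on $[0,T)$, then $c_0$ is itself a Willmore immersion, the flow is stationary, and the length is trivially bounded; otherwise the energy drops strictly below $8\pi$ at some positive time $t_0$, after which the same compactness argument applied on $[t_0,T)$ gives a strict contradiction and hence the desired uniform length bound.
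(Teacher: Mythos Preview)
The paper does not prove Theorem~\ref{A_2}; it is quoted verbatim from \cite{SchlierfWillmoreFlow} as background (see the appendix heading ``Helpful results''), so there is no in-paper argument to compare your proposal against. What can be said is that your sketch faithfully mirrors the strategy the present paper uses for its own Theorem~\ref{1_1}: reduce to a hyperbolic-length bound via Theorem~\ref{A_3}, close up with sphere caps, pass to a generator limit via Theorem~\ref{4_6}, and derive an energy contradiction from a touchdown on the axis. Your handling of the non-strict inequality via strict energy decay away from critical points is also the natural way to close the gap.

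One point deserves care. Your embeddedness argument invokes Li--Yau on $W^e_{closed}\leq 8\pi$, but Li--Yau only forces embeddedness under a \emph{strict} inequality; at the threshold $8\pi$ a double point is not excluded. In the stationary equality case you single out ($c_0$ already Willmore), you would still need a separate reason why $f_{c_0}$ is embedded. Whether Schlierf's original proof handles this via a different mechanism (e.g.\ a maximum-principle or monotonicity argument specific to the rotationally symmetric reduction) is not visible from the present paper; you should flag this as a step that requires checking against \cite{SchlierfWillmoreFlow} rather than asserting it follows from Li--Yau alone.
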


Next we cite a very useful result concerning long time existence for the Willmore flow under rotational symmetry. This result follows by combining \cite[Corollary 2.3]{SchlierfWillmoreFlow}, \cite[Theorem 4.5]{SchlierfWillmoreFlow} and \cite[Theorem 3.14]{SchlierfWillmoreFlow}.
\begin{theorem}
 \label{A_3}
 Let $c:[0,T)\times [0,1]\times\mathbb{S}\rightarrow\Halb$ be a time dependend profil curve, such that the corresponding surfaces of revolution $f_{c(t,\cdot)}$ satisfy the Willmore flow equation \eqref{eq:1_4}  under Dirichlet boundary conditions and let the solution be time maximal. If the hyperbolic length remains bounded
 $$\sup_{t\in[0,T)}\mathcal{L}_\Halb(c(t,\cdot)) < \infty$$
 the solution is global, i.e. $T=\infty$.\\
 Furthermore up to reparametrization the flow converges smoothly to a Willmore immersion.
\end{theorem}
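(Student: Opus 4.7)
The plan is to exploit the well-known identification of the Willmore flow of surfaces of revolution as, essentially, the hyperbolic elastic flow on the profile curve: formula \eqref{eq:2_7} says that $W_e(S(c))$ equals (up to a constant in $t$ coming from Dirichlet boundary data) a multiple of $W_h(c)$. Since the Willmore energy is nonincreasing along \eqref{eq:1_4}, we immediately get a uniform $t$-bound on the hyperbolic elastic energy $W_h(c(t,\cdot))$. Coupling this with the assumed uniform bound on $\mathcal{L}_\Halb(c(t,\cdot))$ reduces Theorem \ref{A_3} to the usual long-time/convergence argument for the hyperbolic elastic flow with clamped endpoints.

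First I would derive uniform $L^\infty$-in-time bounds on the hyperbolic curvature $\kappa_h$ and its arc-length derivatives. The ingredients are: (i) the bound on $\int \kappa_h^2\,ds$ from $W_h$; (ii) the bound on the arc-length $\mathcal{L}_\Halb(c(t,\cdot))$; and (iii) Gagliardo--Nirenberg interpolation on a uniformly bounded arc-length interval, bootstrapped using the parabolic scaling of \eqref{eq:1_2}. Next, I would translate these hyperbolic bounds back to Euclidean bounds on the surface $S(c(t,\cdot))$, which requires a uniform positive lower bound on $c^2(t,s)$. Such a lower bound follows from the length bound: if $c^2$ approached $0$ somewhere while the Euclidean arc-length of $c$ stays controlled (it does, via the Willmore energy plus area monotonicity à la \cite[Lemma 1]{Simon}), the integral defining $\mathcal{L}_\Halb$ would diverge. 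Together these give uniform control on the principal curvatures $k_1^c,k_2^c$ and all their covariant derivatives on $[0,T)$.

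With uniform smoothness, the standard continuation criterion for the fourth-order parabolic problem \eqref{eq:1_3} --- in the Dirichlet setting worked out in \cite[Appendix C]{SchlierfWillmoreFlow}, analogous to the Kuwert--Sch\"atzle criterion for closed surfaces --- rules out $T<\infty$, giving global existence $T=\infty$. For the convergence claim, energy monotonicity gives
\begin{equation*}
\int_0^\infty \int_\Sigma \bigl(\Delta_{g_f}H + 2H(H^2-K)\bigr)^2\,d\mu_{f(t)}\,dt \;<\;\infty,
\end{equation*}
so along some sequence $t_k\to\infty$ the right-hand side of \eqref{eq:1_2} tends to zero in $L^2$. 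Combined with the uniform higher-order bounds and Arzel\`a--Ascoli, a subsequence converges smoothly (after reparametrisation) to a Willmore immersion $f_\infty$ satisfying the Dirichlet data. Promoting subsequential to full convergence is the last step and the main obstacle: I would invoke a Lojasiewicz--Simon gradient inequality for $W_e$ at $f_\infty$ (as used in \cite{ChillFasSchaetz09} for closed surfaces), which requires analyticity of the energy near $f_\infty$ in a suitable chart and integrability of $\|\partial_t f\|_{L^2}$ along the tail of the flow.

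The hard part is the Lojasiewicz--Simon step in the boundary setting: one must carefully set up a slice transverse to the group of reparametrisations of $\Sigma$ fixing $\partial \Sigma$, verify that $W_e$ is real-analytic in this chart, and check that the corresponding gradient inequality holds up to the Dirichlet boundary; uniform regularity from the preceding steps is what makes this workable, but the analytic framework at the boundary is what the cited results in \cite{SchlierfWillmoreFlow} really supply.
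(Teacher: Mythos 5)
The paper does not prove this statement at all: it is quoted verbatim from Schlierf's work and the ``proof'' consists of combining \cite[Corollary 2.3]{SchlierfWillmoreFlow}, \cite[Theorem 4.5]{SchlierfWillmoreFlow} and \cite[Theorem 3.14]{SchlierfWillmoreFlow}. Your sketch therefore has to carry the full analytic weight of those results, and at the decisive point it does not. The gap is the step where you pass from (i) the uniform bound on $W_h(c(t,\cdot))$ (equivalently on $W_e$, via \eqref{eq:2_7}) and (ii) the assumed bound on $\mathcal{L}_\Halb(c(t,\cdot))$ to uniform $L^\infty$-in-time bounds on the curvature and all its derivatives ``by Gagliardo--Nirenberg interpolation bootstrapped using the parabolic scaling''. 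Interpolation cannot upgrade an $L^2$ curvature bound plus a length bound to pointwise bounds: any Gagliardo--Nirenberg estimate for $\|\kappa\|_{L^\infty}$ requires control of $\|\partial_s\kappa\|_{L^2}$ (or higher norms), and obtaining such control uniformly up to the maximal time is exactly the content of the continuation/concentration analysis à la Kuwert--Schätzle, adapted to the Dirichlet boundary, that Schlierf's Theorems 3.14 and 4.5 provide. The actual mechanism there is a blow-up/concentration argument: a finite-time singularity forces concentration of curvature, which under rotational symmetry can only be compatible with the curve degenerating towards the rotation axis or escaping to infinity in $\Halb$, and this is what the hyperbolic length bound excludes. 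Your sketch uses the length bound only to keep $c^2$ away from $0$ (which is correct, and more simply seen from the fact that the hyperbolic distance from height $\alpha_\pm$ to height $\varepsilon$ is at least $\log(\alpha_\pm/\varepsilon)$, so no appeal to \cite[Lemma 1]{Simon} is needed for that point), but a positive lower bound on the height alone does not yield the a priori curvature estimates; without the concentration analysis the argument does not close.

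The remainder of the outline (energy monotonicity giving $\int_0^\infty\|\partial_t f\|_{L^2}^2\,dt<\infty$, subsequential smooth convergence by Arzel\`a--Ascoli once uniform estimates are available, and a \L{}ojasiewicz--Simon inequality in a boundary-adapted setting to promote subsequential to full convergence) is the standard architecture and matches how the cited results are proved, but each of these steps again presupposes the uniform estimates whose derivation is the missing core. As written, the proposal is a plausible roadmap that defers precisely the hard part to the references it was meant to replace.
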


\phantomsection
\addcontentsline{toc}{section}{References}
\bibliography{bibliography}
\bibliographystyle{plain}
\end{document}